\documentclass[reqno,11pt]{amsart}
\usepackage{amsmath,amssymb,amsthm, amscd, braket}
\usepackage[labelformat=empty]{caption}
\usepackage{array}
\usepackage{soul} 
\usepackage{color, xcolor}

\usepackage{amsbsy}
\usepackage{latexsym}
\usepackage[all]{xy}
\usepackage{amsthm}
\usepackage{mathrsfs}
\usepackage{dsfont}

\usepackage{color}

\usepackage{hyperref}
\usepackage{color}
\hypersetup{
    colorlinks=true,
    citecolor=blue,
    linkcolor=black,
  urlcolor=blue,
}

\usepackage{amsbsy}
\usepackage{latexsym}
\usepackage[all]{xy}
\usepackage{amsthm}
\usepackage{mathrsfs}
\usepackage{dsfont}

\usepackage{color}

\usepackage{color, xcolor}
\usepackage{hyperref}
\usepackage{color}
\hypersetup{
    colorlinks=true,
    citecolor=blue,
    linkcolor=black,
  urlcolor=blue,
}

\usepackage{ulem}

\usepackage{amscd,amsthm,amsfonts,amssymb,amsmath}
\usepackage{amsmath,tikz-cd}
\usepackage{fullpage}
\usepackage[all]{xy}
\usepackage{mathtools}
\usepackage{mathrsfs}
\usepackage{nccmath}
\usepackage{amscd} 
 
\newcommand{\C}{\BC}

    \newcommand{\BC}{{\mathbb {C}}}

    \newcommand{\BQ}{{\mathbb {Q}}}

     \newcommand{\BZ}{{\mathbb {Z}}}

    \newcommand{\CA}{{\mathcal {A}}}

     \newcommand{\CL}{{\mathcal {L}}}
     
    \newcommand{\CO}{{\mathcal {O}}}

     \newcommand{\CV}{{\mathcal {V}}}

    \newcommand{\Aut}{{\mathrm{Aut}}}

    \newcommand{\End}{{\mathrm{End}}}

    \newcommand{\Gal}{{\mathrm{Gal}}} \newcommand{\GL}{{\mathrm{GL}}}
    
    \newcommand{\Hom}{{\mathrm{Hom}}}

    \newcommand{\rank}{{\mathrm{rank}}}

    \newcommand{\Q}{\mathbb{Q}}

    \font\cyr=wncyr10

    \newcommand{\Sha}{\hbox{\cyr X}}

    \newcommand{\ov}{\overline}

    \newcommand{\ra}{\rightarrow}

    \theoremstyle{plain}
    \newtheorem{thm}{Theorem}[section] \newtheorem{cor}[thm]{Corollary}
      \newtheorem{prop}[thm]{Proposition}
    \newtheorem {conj}[thm]{Conjecture} \newtheorem{defn}[thm]{Definition}

\theoremstyle{remark} \newtheorem{remark}[thm]{Remark}
\theoremstyle{remark} 
\theoremstyle{remark}

    \newcommand{\cO}{\mathcal O}
    \numberwithin{equation}{section}
  \date{\today}

\begin{document} 
\title{A refined non-vanishing of the $p$-adic logarithm of a  rational point on an abelian variety}
\dedicatory{À Henri Darmon, avec admiration. \\ To Henri Darmon, with admiration. \\ }
\author{Ashay Burungale}
\address{Department of mathematics, University of Texas at Austin, 
2515 Speedway, Austin TX 78712} 
\email{ashayburungale@gmail.com}
\author{Christopher Skinner}
\address{Department of Mathematics, Princeton University, 
Princeton NJ 08544-1000}
\email{cmcls@princeton.edu}

\author{Xin Wan}
\address{Morningside Center of Mathematics; Academy of Mathematics and Systems Science, Chinese
Academy of Sciences, Beijing 100190, China}
\email{xwan@math.ac.cn}
\date{}

\begin{abstract}
Inspired by a beautiful formula of Bertolini, Darmon, and Prasanna -- the oft-termed {\it BDP formula} -- we address questions about the non-vanishing of non-torsion points under $p$-adic logarithms of abelian varieties. We largely consider situations most applicable to $\GL_2$-type abelian varieties associated with Hilbert modular newforms and Heegner points. Not surprisingly, the main tool employed is the $p$-adic analytic subgroup theorem.

    %Let $A$ be an abelian variety defined over a number field. Suppose that $F\hookrightarrow \End^{0}(A)$ for a number field $F$ such that $[F:\BQ]=\dim A$. For a non-torsion $x\in A(\ov{\BQ})$ and a prime $p$, we prove that 
    %$$
    %\log_{A,\sigma}(x) \neq 0
    %$$ 
    %for any embedding $\sigma: F \hookrightarrow \ov{\BQ}$ under some conditions, where $\log_{A,\sigma}$ is the $\sigma$-component of the $p$-adic logarithm $\log_{A}$. We also consider the $p$-adic closure of rational points on $A$ and determine its rank under the structural rank conjecture. 
%    This non-vanishing is an ingredient in the proof of the $p$-part of the conjectural Birch and Swinnerton-Dyer formula for certain $\GL_2$-type abelian varieties over $\BQ$. 
\end{abstract}

\newtheorem{theorem}{Theorem}
\newtheorem{observation}{Observation}

\maketitle
\tableofcontents
\section{Introduction}

\subsection{The BDP formula}
In \cite{BDP1} Darmon, in joint work with Bertolini and Prasanna, proved a beautiful formula that expresses a special value of a certain $p$-adic $L$-function of a modular newform 
as (the square of) the image under a $p$-adic Abel--Jacobi map of a generalized Heegner cycle. 

In the special case that the newform $f$ has weight $2$, level $N$, and trivial Nebentypus, this formula takes the shape:
\begin{equation}\label{eq:BDP1}
L_p(f,\mathbf{N}_K) = (1-a_p(f)+p^{-1})^2 (\log_{\omega_{B_f}}(P_f))^2
\end{equation}  (cf.~\cite[Thm.~ 3.12]{BDP2}).
This has come to be known as the {\it BDP formula}.
Here, $p\nmid 2N$ is a prime, $K/\Q$ is an imaginary quadratic field in which all primes dividing $Np$ split (so $K$ satisfies a `Heegner hypothesis' relative to $f$), 
and $L_p(f,-)$ is a continuous function of a $p$-adic space of ($p$-adic) Hecke characters over $K$ that takes values in $\BC_p$.
The function $L_p(f,-)$ interpolates the algebraic parts of special $L$-values $L(f,\chi^{-1},0)$ for $\chi$ in a particular set of Hecke characters over $K$. The norm character $\mathbf{N}_K$ belongs to the $p$-adic space of characters but does not belong to the set with interpolated special $L$-values. 
On the other side of the formula, $B_f$ is an abelian variety over $\Q$ in the isogeny class associated with $f$ that is realized as a quotient $\Phi_f:J_1(N)\twoheadrightarrow B_f$
and $P_f\in B_f(K)$ is the Heegner point. The differential $\omega_{B_f}$ is the unique differential in $\Omega^1_{B_f}\otimes_{\BQ}\ov{\BQ}$ that pulls back under $\Phi_f$ to the differential $\omega_f = 2\pi i f(\tau)d\tau$ in $\Omega^1_{J_1(N)}\otimes_{\BQ}\ov{\BQ}= \Omega^1_{X_1(N)}\otimes_{\BQ}\ov{\BQ}$.   The logarithm $\log_{\omega_{B_f}}$ is the $p$-adic logarithm associated with the differential. Hidden in all this is the choice of an embedding $\iota_p: \ov{\BQ}\hookrightarrow \ov{\BQ}_p$ via which the algebraic parts
of the special $L$-values are viewed as belonging to $\C_p$ and via which the differential $\omega_f$ is viewed as belonging to $\Omega^1_{B_f}\otimes_{\BQ}\ov{\BQ}_p$; there is a $p$-adic $L$-function and a formula for each choice of $\iota_p$.  

The BDP formula \eqref{eq:BDP1} was inspired by an earlier, similar formula of Rubin for CM elliptic curves \cite{Ru}, and in \cite{BDP2} Darmon and his collaborators gave a new proof Rubin's formula based on \eqref{eq:BDP1}.  As was beautifully explained in \cite{BCDDPR}, this formula can also be seen as a special case of Perrin-Riou's $p$-adic Beilinson conjectures and is closely analogous to $p$-adic regulator formulas for the $p$-adic $L$-functions of Kubota--Leopoldt and Katz, which involve $p$-adic logarithms of circular and elliptic units, respectively.  The BDP formula has also played a central role in progress toward the Birch--Swinnerton-Dyer conjecture for elliptic curves, especially in conjunction with the Gross--Zagier formula, the methods of Kolyvagin, and Iwasawa theory (see in particular \cite{S,BS1,W,JSW,BST,BSTW,BT,BCST,BCGS}).  The question addressed in this note arises naturally from the BDP formula and a wish to extend the results for elliptic curves to the abelian varieties associated with a weight $2$ newform. 

\subsection{The question}
Suppose the Heegner point $P_f \in B_f(K)$ is non-torsion, which happens if and only if the $L$-function $L(f/K,s)$ has a zero of order one at $s=1$ by the Gross--Zagier formula \cite{GZ,YZZ}.
Then it is natural to ask: 
$$
\text{Is its $p$-adic logarithm $\log_{\omega_f}(P_f)$ non-zero?} 
$$

Indeed, there are good reasons coming from Iwasawa theory to expect that this is so.  For example, the $p$-adic height of $P_f$ appearing in Kobayashi's $p$-adic Gross--Zagier formula \cite{Kob,BKO} (for the case that $f$ is {non-ordinary} at $p$) can only be non-zero if $\log_{\omega_f}$ is.  Other reasons arise from considering the Iwasawa-theoretic main conjecture
associated with $p$-adic $L$-function $L_p(f,-)$ appearing in the BDP formula \eqref{eq:BDP1}, in which case the answer `yes' can be connected to the truth of the expected
BSD formula for the derivative at $s=1$ of the complex $L$-function $L(f,s)$ 
(see \cite{S,JSW}). 
Moreover, in the case that $B_f$ is an elliptic curve $E$, the answer is clearly yes:
$E(K)$ injects into $E(\ov{\BQ}_p)$ and the kernel of the $p$-adic logarithm $\log_{\omega_E}:E(\ov{\BQ}_p)\rightarrow \ov{\BQ}_p$ is just the torsion points. 

The subtleties appear when $B_f$ is not an elliptic curve, as we will explain below. In \cite[Lem.~2.2.2]{S}, one of us (C.S.) implicity asserted a positive answer to this question (this has no impact on the main results of \cite{S}). Another of us (X.W.) later pointed out that no proof was given. In this note we supply a proof that the answer to this question is `yes.' Perhaps not surprisingly, our proof is based on results from $p$-adic transcendence theory -- the $p$-adic analytic subgroup theorem, in particular.

\subsection{The case of general $B_f$} To simplify notation, we will now write $A$ for $B_f$. Let $\iota_\infty:\ov{\BQ}\hookrightarrow\C$ be a fixed embedding. Then via $\iota_\infty$, the Fourier coefficients of $f$ generate a totally real number field $F\subset \ov{\BQ}$, the dimension of $A$ equals the degree $[F:\BQ]$ of $F$, and the endomorphism ring $\End_\Q(A)$ contains an order $\CO$ of $F$.  
Let $t_A$ be the tangent space of $A$ and recall that the space of differentials $\Omega^1_A$ is canonically identified with the dual of $t_A$. In particular,
$\Omega = \Omega^1_A\otimes_{\BQ}\ov{\BQ}$ is the $\ov{\BQ}$-dual of $V = t_A\otimes_{\BQ}\ov{\BQ}$; given $\omega\in \Omega$ we write $\ell_\omega: V\rightarrow\ov{\BQ}$
for the corresponding linear map.  The action of $\CO$ on $A$ induces an action of $F$ on $\Omega^1_A$ and hence an action of $F\otimes_{\BQ}\ov{\BQ}$ on $\Omega$. 
Let $\Sigma_F = \{\sigma:F\hookrightarrow \ov{\BQ}\}$ be the set of embeddings of $F$ into $\ov{\BQ}$.  The decomposition 
$F\otimes_{\BQ}\ov{\BQ} = \oplus_{\sigma\in \Sigma_F}\ov{\BQ}e_\sigma$, where $e_\sigma^2=e_\sigma$ and the action of $F$ on $e_\sigma$ is via the embedding $\sigma$,
induces a decomposition $$\Omega = \oplus_{\sigma \in\Sigma_F} \Omega_\sigma,$$ with $\Omega_\sigma = e_\sigma \Omega$.  For each $\sigma\in \Sigma_F$, let 
$f^\sigma$ be the newform whose Fourier coefficients are obtained from $f$ by applying $\sigma$. Then $\omega_{f^\sigma}$ is a $\ov{\BQ}$-basis for $\Omega_\sigma$.

Let $p$ be a prime and let $\iota_p:\ov{\BQ}\hookrightarrow \ov{\BQ}_p$ be a fixed embedding.
Let $V_p = t_A\otimes_{\ov{\BQ}}\ov{\BQ}_p$. The $p$-adic logarithm of $A$ is a locally analytic homomorphism $$\log_A: A(\ov{\BQ}_p)\rightarrow V_p$$ whose
kernel is the group of torsion points. The logarithm is $\CO$-invariant.  The embedding $\iota_p$ induces an inclusion $A(\ov{\BQ})\hookrightarrow A(\ov{\BQ}_p)$ as well as an identification $V_p = V\otimes_{\ov{\BQ}}\ov{\BQ}_p$.  In particular, each $\ell_\omega$, $\omega\in \Omega$ extends to
a $\ov{\BQ}_p$-linear map $\ell_\omega:V_p\rightarrow \ov{\BQ}_p$.  We set $\log_\omega:A(\ov{\BQ}_p)\rightarrow \ov{\BQ}_p$ to be the composition 
$$\log_\omega = \ell_\omega\circ\log_A.$$  If $\omega = \omega_{f^\sigma}$, then this is the $p$-adic logarithm appearing in the BDP formula.
Note that the definition of $\log_\omega$ depends on the choice of $\iota_p$. 
Replacing $\iota_p$ with some $\iota_p'$ has the effect of replacing $\log_{\omega_{f^\sigma}}$ 
(with respect to $\iota_p$) with $\log_{\omega_{f^{\sigma_1}}}$ for some $\sigma_1$ (but again with respect to $\iota_p$), and 
all $\sigma_1\in\Sigma_F$ can be reached this way. This is not an issue if $F=\Q$, but is otherwise.  As there is no
distinguished $\iota_p$, to properly answer the question one should consider all the $\log_{\omega_{f^\sigma}}$'s. 
This should not be surprising: this is just answering the question for all newforms in the Galois orbit of $f$ and these all have the same
associated abelian varieties. 

Let $x\in A(\ov\BQ)$ be a non-torsion point. Then it is clear that $\log_A(x)$ is non-zero and hence that $\log_{\omega_{f^\sigma}}(x)\neq 0$ for {\it some} $\sigma$ but possibly not all. In this note we address the question: 
$$
\text{Is $\log_{\omega_{f^\sigma}}(x) \neq 0$ for all $\sigma\in\Sigma_F$?}
$$
We give a positive answer to this. In fact, we address a more general question that also applies to abelian varieties associated with Hilbert modular forms and with Heegner points that come from twisting by finite order ring class characters over CM fields.

\subsection{A representative result}
The fact that $F$ was totally real played no obvious role in the above discussion, nor did the fact that $A$ was defined over $\BQ$. We can make the same constructions and ask the same questions for any abelian variety $A/\ov{\BQ}$ with an embedding $F\hookrightarrow \End^0_{\ov{\BQ}}(A)$ of a field $F$. In this context we prove:

\begin{thm}\label{thm:A} Let $A/\ov{\BQ}$ be an abelian variety. Suppose there is a field $F\subset \End^0_{\ov{\BQ}}(A)$ such that 
\begin{itemize}
\item[(a)] $\dim(A) = [F:\Q]$,
\item[(b)] $F$ has at least one real embedding.
\end{itemize}
For any non-torsion $x\in A(\ov{\BQ})$, we have 
$$\log_{\omega}(x)\neq 0$$ for all $0\neq \omega\in \Omega_\sigma$ and all $\sigma\in \Sigma_F$. 
\end{thm}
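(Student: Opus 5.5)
Fix $\sigma\in\Sigma_F$ and $0\neq\omega\in\Omega_\sigma$, and suppose for contradiction that $\log_\omega(x)=0$ for some non-torsion $x\in A(\ov{\BQ})$. The plan is to use the $p$-adic analytic subgroup theorem (Matev's $p$-adic version of Wüstholz's theorem, in the form of Fuchs--Pham): if $u\in V_p$ is the logarithm of a point of $A(\ov{\BQ})$, then the smallest $\ov{\BQ}$-subspace $W\subset V$ with $u\in W\otimes\ov{\BQ}_p$ is the tangent space $t_B\otimes\ov{\BQ}$ of an abelian subvariety $B\subset A$, up to the obvious torsion ambiguity.

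Step one: replace $x$ by a point whose logarithm is controlled by the $F$-action. Let $M=\CO x$, or rather the $\CO$-submodule generated by $x$, with $\CO\subset\End(A)$ an order in $F$. Since $\log_A$ is $\CO$-equivariant and $\ell_\omega$ factors through the $e_\sigma$-projection, $\log_\omega(\alpha x)=\sigma(\alpha)\log_\omega(x)=0$ for all $\alpha\in\CO$. So $\ell_\omega$ kills $\log_A(\alpha x)$ for all $\alpha$. Let $W\subset V$ be the smallest $\ov{\BQ}$-subspace with $\log_A(x)\in W\otimes\ov{\BQ}_p$. By the analytic subgroup theorem, $W=t_B\otimes\ov{\BQ}$ for an abelian subvariety $B\subset A$ containing a nonzero multiple of $x$. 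Since $x$ is non-torsion, $B\neq 0$. Also $\ker\ell_\omega\cap V$ is a $\ov{\BQ}$-rational hyperplane containing $\log_A(x)$ after extending scalars, so minimality gives $W\subset\ker\ell_\omega$, so $W\neq V$ and $B\neq A$.

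Step two: upgrade $B$ to an $F$-stable subvariety. The sum $B'=\sum_{\alpha\in\CO}\alpha B$ is again an abelian subvariety containing a multiple of $x$ with tangent space $\sum\alpha W$. Each $\alpha W$ is the minimal rational subspace for $\log_A(\alpha x)$, hence lies in $\ker\ell_\omega$. Therefore $B'$ is a nonzero proper $\CO$-stable abelian subvariety. For such a $B'$, $F$ acts on $B'$ and $H_1(B',\BQ)$ is an $F$-vector space. Its $\BQ$-dimension $2\dim B'$ is then a multiple of $[F:\BQ]=\dim A$. Since $0<\dim B'<\dim A$, we need $2\dim B'=\dim A$, so $\dim B'=\tfrac12[F:\BQ]$ and $F$ acts on $H_1(B',\BQ)$ as a one-dimensional $F$-space.

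Step three, the main obstacle: derive a contradiction from this, using hypothesis (b). Now $F\subset\End^0(B')$ with $[F:\BQ]=2\dim B'$, so $B'$ has CM by $F$ (or at least $F$ is a maximal commutative subfield acting on an abelian variety of half the dimension). For such an action, the representation of $F$ on $t_{B'}\otimes\ov{\BQ}$ is a CM-type-like set $\Phi$ with $\Phi\sqcup\bar\Phi$ equal to the representation on $H_1\otimes\BC$. In particular $F$ must be a CM field, or more precisely it cannot have a real embedding: a real embedding $\tau$ would occur in $H_1(B')\otimes\BC$ with multiplicity one but be invariant under complex conjugation, which is incompatible with $H_1\otimes\BC=t\oplus\bar t$. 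I expect making this Hodge-theoretic argument airtight to be the delicate point, since $B'$ is only defined over $\ov{\BQ}$ and we must transport via $\iota_\infty$. I would try to cite the standard fact that an abelian variety of dimension $g$ with a field of degree $2g$ in $\End^0$ is CM and the field is then a CM algebra. This contradicts (b), so $\log_\omega(x)\neq0$.
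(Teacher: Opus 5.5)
Your proposal is correct and is essentially the paper's proof: apply the $p$-adic analytic subgroup theorem, saturate $B$ under an order of $F$ to get a nonzero proper $F$-stable abelian subvariety $B'$ with $2\dim B'=[F:\BQ]$, and use $H_1(B'(\C),\C)=t_{B'}\oplus\ov{t}_{B'}$ to rule out a real embedding. The paper applies the theorem directly to the $F$-stable hyperplane $\ker\ell_\omega$, so your detour through the minimal subspace $W$ and the spaces $\alpha W$ is unnecessary but harmless.

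Your Hodge argument in step three is already complete, and base change along $\iota_\infty$ causes no difficulty. Do not replace it with the claim that $F$ must be a CM field: that is false in general. For example, the non-CM quartic field $\BQ(i,\sqrt{1+2i})$ embeds in $M_2(\BQ(i))=\End^0(Y\times Y)$ for an elliptic curve $Y$ with CM by $\BQ(i)$. Only total imaginarity holds, and that is all you need.
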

In particular, this theorem gives a positive answer to the two questions displayed above.
We actually prove more general theorems. These are stated in 
Section \ref{s, mr}; see especially Theorem \ref{thm,mr'} and Remark \ref{rmk:impliesA}. The proofs are given in Section \ref{s, proofs}.
We also discuss a motivating example arising from 
CM modular forms (see Section \ref{ss,ex}) and an arithmetic application, to the BSD formula for the modular
abelian variety $B_f$ in the case that $L(f,s)$ has analytic rank one (see Section \ref{ss, p-BSD}). 

\subsection{Some additional context} 
Let $G/\BQ$ be a commutative algebraic group.
In \cite{Po} Poonen asked an intriguing question about the dimension (as a $p$-adic Lie group) of the $p$-adic closure $\ov{\Gamma}$ in $G(\BQ_p)$ of a finitely-generated subgroup
$\Gamma\subset G(\BQ)$ contained in the union of all compact subgroups of $G(\BQ_p)$. This question is also independently due to Prasad \cite{Pr}.
If $G=A$ is an abelian variety that is $\BQ$-simple and $\Gamma = A(\BQ)$, then a special case of this question asks:
\begin{equation*}
\text{Is $\dim(\ov{\Gamma}) = \min\{\dim(A),\rank A(\BQ)\}$?}
\end{equation*}

The best result to date towards answering this is due to Waldschmidt \cite{Wa}, who proved that if $A$ is $\ov{\BQ}$-simple,
then $\dim\ov{A(\Q)} \geq (\rank A(\BQ) \cdot \dim(A))/(\rank A(\BQ) + 2\dim(A))$ (so in particular, at least $\frac{1}{3}\min\{\rank A(\Q), \dim(A)\}$).
However, if $A$ is as in Theorem \ref{thm:A} with the action of $F$ defined over $\Q$, then the conclusion of Theorem \ref{thm:A} shows that this question has a positive answer in this case.  

In the spirit of Poonen's or Prasad's question, in the final section of this note, we explain how a simple variant of the proof of Theorem \ref{thm:A} proves:
\begin{thm}\label{thm:B}
Let $A/\BQ$ be an abelian variety and suppose $F = \End^0_{\ov\BQ}(A)$ is a field (so $F$ is totally real or CM). 
Let $x_1,...,x_r\in A(\ov{\BQ})\otimes_{\BZ}\Q$ be $F$-linearly independent
and set $\Gamma = F\cdot x_1+\cdots+ F\cdot x_r$. Assuming the structural rank Conjecture~\ref{conj,str}, the dimension of the $\ov{\BQ}_p$-space $W_p\subset V_p$
spanned by $\log_A(\Gamma)$ equals $\min\{r\deg(F),\dim(A)\}$ if $F$ is totally real and is at least $\min\{r\deg(F)/2,\dim(A)\}$ if $F$ is CM. 
\end{thm}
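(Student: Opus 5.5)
The idea is to reduce the dimension of $W_p$ to the rank of an explicit matrix of $p$-adic logarithms. That matrix will split into one block per embedding $\sigma$. The structural rank Conjecture~\ref{conj,str} then computes each block's rank, provided we can show that the points $x_1,\dots,x_r$ satisfy no "structural" relations beyond those forced by the $F$-action.

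\emph{Reduction to blocks.} For each $\sigma\in\Sigma_F$, let $n_\sigma=\dim\Omega_\sigma$ and choose a $\ov{\BQ}$-basis $\omega_{\sigma,1},\dots,\omega_{\sigma,n_\sigma}$ of $\Omega_\sigma$. Together these form a basis of $\Omega$, the dual of $V$. So $\dim W_p$ equals the rank of the matrix whose rows are indexed by the pairs $(\sigma,j)$, whose columns are indexed by $\log_A(a_i x_k)$, and whose entries are $\ell_{\omega_{\sigma,j}}(\log_A(a_i x_k))$. Here $a_1,\dots,a_d$ is a $\BQ$-basis of $F$ and $d=\deg F$. Since $\log_A$ is $F$-equivariant and $F$ acts on $\Omega_\sigma$ through $\sigma$, we have
$$\log_{\omega_{\sigma,j}}(a_i x_k)=\sigma(a_i)\,L^{\sigma}_{j,k},\qquad L^\sigma_{j,k}:=\log_{\omega_{\sigma,j}}(x_k).$$
The matrix $(\sigma(a_i))_{\sigma,i}$ is invertible (it is a discriminant matrix). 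A column change over $\ov{\BQ}$ therefore makes the big matrix block diagonal, with blocks $M_\sigma=(L^\sigma_{j,k})$ of size $n_\sigma\times r$. Hence
$$\dim W_p=\sum_\sigma \rank M_\sigma.$$

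\emph{Applying the conjecture.} I will use Conjecture~\ref{conj,str} for the entries $L^\sigma_{j,k}$. These are $\ov{\BQ}$-linear forms evaluated at $\log_{A^r}(x)$, where $x=(x_1,\dots,x_r)\in A^r(\ov\BQ)$. The conjecture should say that the rank equals the generic rank subject only to the $\ov{\BQ}$-linear relations coming from algebraic subgroups. Concretely, such a relation must come from a proper abelian subvariety $B\subset A^r$ that contains a multiple of $x$ and on whose tangent space the relevant functional vanishes; this is the mechanism of the $p$-adic analytic subgroup theorem.

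The key step is to show that no such $B$ exists, so that the smallest abelian subvariety containing $\BZ x$ is all of $A^r$. Since $\End^0_{\ov\BQ}(A)=F$ is a field, $A$ is $\ov\BQ$-simple (up to isogeny) and $\End^0(A^r)=M_r(F)$. Every proper abelian subvariety of $A^r$ is then contained in the kernel of a nonzero $F$-linear map $A^r\to A$, $(y_k)\mapsto \sum c_k y_k$. If such a $B$ contained a multiple of $x$, we would get $\sum c_k x_k=0$ in $A(\ov\BQ)\otimes\BQ$, contradicting the $F$-linear independence of the $x_k$. So the entries $L^\sigma_{j,k}$ are structurally generic, apart from the block shape already accounted for. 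The conjecture then gives
$$\rank M_\sigma=\min\{n_\sigma,r\}.$$
This is the main obstacle. One must match the precise formulation of Conjecture~\ref{conj,str}, which probably concerns matrices whose entries are $\ov\BQ$-combinations of logarithms, with the block structure. In particular, one has to check that distinct $\sigma$-blocks do not interact, which is where the eigen-decomposition under $F\otimes\ov\BQ$ is used.

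\emph{Counting.} Let $m=\dim A/d$.
\begin{itemize}
\item \emph{$F$ totally real.} Hodge symmetry on $H^1(A)$, which decomposes into $F\otimes\BC$-eigenspaces of dimension $2m$, gives $n_\sigma=n_{\bar\sigma}=n_\sigma$, and hence $n_\sigma=m$ for every $\sigma$. Summing over the $d$ embeddings gives $d\min\{m,r\}=\min\{\dim A,\,rd\}$.
\item \emph{$F$ CM.} Here only $n_\sigma+n_{\bar\sigma}=2m$ holds. For each pair $\{\sigma,\bar\sigma\}$ we have $\min\{n_\sigma,r\}+\min\{n_{\bar\sigma},r\}\ge \min\{2m,r\}$. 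Summing over the $d/2$ pairs gives $\dim W_p\ge \tfrac d2\min\{2m,r\}=\min\{\dim A,\,rd/2\}$.
\end{itemize}
Both cases agree with the statement of the theorem.
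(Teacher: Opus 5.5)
Your proposal is correct and follows the paper's route: split $W_p$ into the $\sigma$-blocks $M_\sigma=\CL_{\uline{x},\sigma}$, and use the $p$-adic analytic subgroup theorem to see that the entries of each block satisfy no nontrivial linear relation (you argue directly on $A^r$ with $x=(x_1,\dots,x_r)$, where the paper adapts Proposition~\ref{prop, str}); then each block has structural rank $\min\{n_\sigma,r\}$, and you sum over $\sigma$. The ``main obstacle'' you flag is not one, since Definition~\ref{def,str} only sees $\BQ$-linear relations among the entries (which your argument excludes) and the conjecture is applied to each $M_\sigma$ separately; moreover, your count via $n_\sigma+n_{\bar\sigma}=2m$ covers the general case, which the paper only spells out when all $d_\sigma=d_0$ (the totally real bullet is garbled and should read $\bar\sigma=\sigma$, hence $2n_\sigma=2m$).
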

From the perspective of this theorem, the answer to the first two displayed questions boil down to the structural rank conjecture being trivially true for $1\times 1$-matrices!
As in the proof of Theorem \ref{thm:A}, the heavy lifting is done by the $p$-adic analytic subgroup theorem.

\subsection*{Acknowledgments} We than Dipendra Prasad and the referee for helpful suggestions on the note. 

The works of Henri Darmon and his unerring sense of fruitful questions and enthusiasm for mathematical exploration
have been and continue to be an inspiration to us. It is our pleasure to dedicate this note to Henri and express our sincere gratitude to him and especially to include it in this volume marking his 60th birthday.

During the preparation of this paper, A.B. was partially supported by the NSF grant DMS-2302064
%; C.S. was partially supported by the Simons Investigator Grant \#376203 from the Simons Foundation and by the NSF grant DMS-1901985 
and X.W. was partially supported 
by NSFC grants 12288201, 11621061, CAS Project for Young Scientists in Basic Research grant no. YSBR-033,  
National Key R\&D Program of China 2020YFA0712600 and the Strategic Priority Research Program of Chinese Academy of Sciences under Grant XDA0480503.

\section{Main results: statements} \label{s, mr}
In this section we state our main results. The proofs appear in section \ref{s, proofs}. 
Let $\iota_\infty:\ov{\BQ}\hookrightarrow \C$ and $\iota_p:\ov{\BQ}\hookrightarrow \ov{\BQ}_p$ be fixed embeddings.
We use the former to identify $\ov{\BQ}$ with a subfield of $\C$.

\subsection{The set-up}\label{ss,st}
Let $A$ be an abelian variety over a number field $L$.
Suppose that there exists a number field $E$ and an embedding  
$$
\theta: E\hookrightarrow \mathrm{End}^0_L(A).
$$
%We assume that
%\begin{equation*}
%\mathrm{(a)} \ \ \ \mathrm{dim}(A) = [E:\mathbb{Q}].
%\end{equation*}

Let $t_A$ be the tangent space of $A$. Then $t_A$ is an $E\otimes_{\BQ} L$-module, with the action of 
$E$ induced by $\theta$ and the action of $L$ the usual scalar action. Let $\Omega^1_A$ be the differentials
of $A$. This, too, is an $E\otimes_{\BQ} L$-module. Furthermore, $\Omega_A^1$ is canonically identified with the $L$-dual 
of $t_A$:  $\Omega_A^1 = \Hom_L(t_A,L)$, and the $E\otimes_{\BQ} L$-action on $\Omega_A^1$ is just that induced
from the action on $t_A$.

Let $\Sigma_E = \{\sigma:E\hookrightarrow \ov{\BQ}\}$ be the set of embeddings of $E$.
Recall that there is a canonical decomposition $E\otimes_{\BQ}\ov{\BQ} = \oplus_{\sigma\in\Sigma_E} \ov{\BQ}e_\sigma$, 
where $e_\sigma^2 = e_\sigma$ and $E$ acts on $e_\sigma$ via the embedding $\sigma$.

%In some situations we might also assume that  
%\begin{itemize}
%\item[(b)] $t_A$ is a free $E\otimes L$-module of rank $1$ (equivalently, $\Omega_A^1$ is a free $E\otimes L$-module of rank $1$).
%\end{itemize}

Let $H/L$ be a finite extension of fields. Fix an embedding $\tau: H\hookrightarrow \ov{\mathbb{Q}}$.
Let $V = t_A\otimes_{L,\tau}\ov{\BQ}$ and let $\Omega = \Omega_A^1\otimes_{L,\tau}\ov{\BQ}$. 
Let $V_{\sigma} = e_\sigma V$ and $\Omega_{\sigma} = e_\sigma\Omega$, so $V= \oplus_\sigma V_{\sigma}$ and $\Omega = \oplus_\sigma \Omega_{\sigma}$.
Under $\Omega = \Hom_{\ov{\BQ}}(V,\ov{\BQ}) = \oplus_\sigma\Hom_{\ov{\BQ}}(V_{\sigma},\ov{\BQ})$, 
$\Omega_{\sigma}$ is identified with $\Hom_{\ov{\BQ}}(V_{\sigma},\ov{\BQ})$.
Given $\omega\in \Omega$ we write $\ell_\omega: V\rightarrow \ov{\BQ}$ for the corresponding $\ov{\BQ}$-linear map.

Let $V_p = V\otimes_{\ov{\BQ},\iota_p}\ov{\BQ}_p$. Note that
$V_p = \oplus_{\sigma} V_{p,\sigma}$ for $V_{p,\sigma} = e_\sigma V_p = V_\sigma \otimes_{\ov{\BQ},\iota_p}\ov{\BQ}_p$.
Also, each $\ell_\omega$ extends to a $\ov{\BQ}_p$-linear map $\ell_\omega: V_p\rightarrow \ov{\BQ}_p$; if $\omega \in \Omega_\sigma$, 
then $\ell_\omega$ factors through the projection to $V_{p,\sigma}$.

Let $\log_A: A(\ov{\BQ}_p)\rightarrow V_p$
be the $p$-adic logarithm for $A$. For $\omega\in \Omega$ we let
$$
\log_\omega = \ell_\omega\circ\log_A: A(\ov{\BQ}_p)\rightarrow \ov{\BQ}_p.
$$
This is the $p$-adic logarithm associated with $\omega$.  Our aim is understand when $\log_\omega(x)\neq 0$ for $x\in A(H)$ non-torsion and $0\neq \omega\in \Omega_\sigma$
for some $\sigma$. Before we state our main result in this direction we need a couple of preliminary observations.

\subsubsection{Trace fields}\label{ss, tf}
Suppose that $\dim(A) = [E:\BQ]$. Let $B\subset A$ be a proper non-zero $E$-stable abelian subvariety (not necessarily defined over $L$). 
Here by `$E$-stable' we mean in the category where we have 
replaced $\Hom(X,Y)$ with $\Hom^0(X,Y) = \Hom(X,Y)\otimes_{\BZ}\BQ$ for two abelian varieties $X$, $Y$. 
Then $\dim(B) < [E:\Q]$, so it follows that\footnote{by considering $H_1(B(\C),\Q)$ as an $E$-space} $2\dim(B) = [E:\Q]$. 
It also follows that $E$ has no real embeddings\footnote{by considering $H^1(B(\C),\Q) = t_{B/\C}\oplus \ov{t}_{B/\C}$}.
Let $B' = A/B$. This is also an abelian variety with an induced $E$-action. It follows that there is an isogeny 
$$A\sim B\times B'$$ that is $E$-equivariant for the diagonal action of $E$ on the product. So the $E$-action on 
$A$ extends to an $E\times E$-action on $A$, with the original action coming from the diagonal embedding. Note that
the extended action may not be defined over $L$.
Moreover, if such an extension to an $E\times E$-action exists, then an $E$-stable abelian subvariety $B$ clearly exists such that 
$E\times E$ also acts on $B$ through projection to the first factor.
\begin{defn}
We define the trace field of an $E\times E$-extension as above to be the extension of $L$ generated by the values
of the traces of the elements of $E$ acting on the tangent space $t_{B/\ov{\BQ}}$ of $B/\ov{\BQ}$. 
\end{defn}
Since $V = t_{A/\ov{\BQ}} = t_{B/\ov{\BQ}}\oplus t_{B'/\ov{\BQ}}$ and the 
$E$-action on $A$ is defined over $L$, the trace field is the same as the extension generated by the traces of the elements of $E$ acting on the tangent space $t_{B'/\ov{\BQ}}$.  It is also just the extension of $L$ generated by the traces of the elements of $E\times 0 \subset E\times E$ acting
on $V$. 
If $B''$ is any other $E$-stable
subvariety, then one of the projections $B''\rightarrow B$ or $B''\rightarrow B'$ induced from the isogeny
$A\sim B\times B'$ must be non-zero and hence an $E$-equivariant isogeny. Therefore, the 
trace field of an $E\times E$-extension does not depend on the particular extension, just that such an extension exists.

\begin{remark}\label{rmk, tf} 
If $E$ is a CM field then the trace field is the composition of $L$ with the reflex field of the action of the CM-algebra $E\times E$. This is immediate from the definitions.
\end{remark}

%Note that $V$ is a free $E\otimes\ov{\mathbb{Q}}_p$-module of rank $1$. We {\color{red}decompose}
%$V=\oplus_{\sigma: E\hookrightarrow \ov{\mathbb{Q}}_p} V_\sigma,$
%where 
%$V_\sigma=t_A\otimes_{E,\sigma}\ov{\mathbb{Q}}_p$
%is a $1$-dimensional $\ov{\mathbb{Q}}_p$-space.
%Let
%$$\mathrm{log}_{A,\sigma}: A(\ov{\mathbb{Q}}_p)\rightarrow V_\sigma$$
%be the projection of $\mathrm{log}_{A}$ to $V_\sigma$. Given a non-torsion point $x\in A(H)$, a basic problem is whether $\mathrm{log}_{A,\sigma}(x)\not=0$ for any $\sigma: E \hookrightarrow \ov{\BQ}_p$ (cf.~\eqref{Q}).
%
%

\subsection{Main results}
We can now state our first main result. 

\begin{thm}\label{thm,mr'} Suppose that
\begin{itemize}
\item[(a)] $\dim(A) = [E:\BQ]$, 
\item[(b)] if the embedding $\theta: E\hookrightarrow \mathrm{End}^0_L(A)$ extends to an embedding $E\times E \hookrightarrow \End^0_{L}(A)$,
with $\theta$ being the restriction to the diagonal, then $H$ does not contain the trace field of this extension.
\end{itemize}
Then for all non-torsion $x\in A(H)$, all $\sigma \in \Sigma_E$, and all $0\neq \omega \in \Omega_\sigma$ 
we have 
$$
\log_{\omega}(x)\neq 0.
$$
\end{thm}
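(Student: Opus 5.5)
Assume $\log_\omega(x)=0$ for some non-torsion $x\in A(H)$, some $\sigma$, and some $0\neq\omega\in\Omega_\sigma$; we derive a contradiction. Let $\CO\subset E$ be an order acting on $A$ by genuine endomorphisms, after replacing $A$ by an isogenous variety, which does not change the statement. Put $\Gamma=\CO\cdot x\subset A(H)$. Since $\log_A$ is $\CO$-equivariant and $\omega\in\Omega_\sigma$ is an $E$-eigenvector, we have $\log_\omega(ax)=\sigma(a)\log_\omega(x)$. Hence $\log_\omega$ vanishes on all of $\Gamma$.

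By (a), $\dim V_\sigma=1$. So the kernel $W=\ker(\ell_\omega)=\bigoplus_{\tau\neq\sigma}V_\tau$ is a $\ov{\BQ}$-rational hyperplane of $V$, it is $E$-stable, and $\log_A(\Gamma)\subset W_p:=W\otimes\ov{\BQ}_p$. Now apply the $p$-adic analytic subgroup theorem (Fuchs--Pham form) to the abelian variety $A$ over the number field $\tau(H)$, the point $x$, and the algebraic subspace $W$. It produces an abelian subvariety $B\subset A$, defined over $\ov{\BQ}$, such that $t_B\subset W$ and some nonzero multiple $mx$ lies in $B$. Refining this by the standard argument, we may take $B$ to be the smallest abelian subvariety containing a multiple of $x$. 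That minimal $B$ is unique and $\CO$-stable, since each $a(B)$ contains $a(mx)\in\Gamma$ and intersections behave well. Because $x$ is non-torsion, $B\neq 0$. Because $t_B\subset W\neq V$, $B\neq A$. So $B$ is a proper non-zero $E$-stable abelian subvariety.

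By the discussion in \S\ref{ss, tf}, the $E$-action then extends to an $E\times E$-action with $A\sim B\times B'$, $E$ has no real embeddings, and $\dim B=[E:\Q]/2$. The key step is to show that $H$ contains the trace field, contradicting (b). For this we use the minimality of $B$. For any $g\in\Gal(\ov{\BQ}/H)$, the subvariety $g(B)$ is again $E$-stable (the $E$-action is defined over $L\subset H$) and contains $g(mx)=mx$. By minimality and uniqueness, $g(B)=B$, so $B$ is defined over $H$. Consequently the representation of $E$ on $t_B$ is defined over $H$, and its traces lie in $H$ (via $\tau$). The trace field is by definition $L$ together with these traces, so it lies in $H$, contradicting (b). If no $E\times E$ extension exists, the existence of $B$ already contradicts the discussion of \S\ref{ss, tf}.

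I expect the main obstacle to be the application of the subgroup theorem. The $p$-adic analytic subgroup theorem requires the hyperplane condition in the right form, namely that $\log_A(x)$ lies in a $\ov{\BQ}$-rational subspace. It must also produce an abelian subvariety containing a multiple of $x$ itself, not merely one whose tangent space contains the line $\ov{\BQ}_p\log_A(x)$. The minimality and uniqueness giving Galois descent to $H$ also needs care: we should take $B$ to be the connected component of the Zariski closure of $\BZ x$, which is canonical, and check that the subgroup theorem forces $t_B\subset W$ for this particular $B$.
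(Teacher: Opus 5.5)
\textbf{There is a genuine gap, but it is easily repaired.} Your overall strategy matches the paper's. You take $W=\ker\ell_\omega$ as an $E$-stable subspace and use the subgroup theorem to get a non-zero proper $B$ with $t_B\subset W$ containing a multiple of $x$. Then \S\ref{ss, tf} gives the $E\times E$-extension, and descending $B$ to $H$ puts the trace field inside $H$.

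\textbf{The gap.} The step that fails is your claim that the smallest abelian subvariety containing a non-zero multiple of $x$ is $\CO$-stable. Your justification does not work: $a(B)$ contains a multiple of $ax$, not of $x$. The claim is in fact false even when $\log_\omega(x)=0$.

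For a counterexample, let $E=K(\sqrt2)$ with $K$ imaginary quadratic, and let $C$ have CM by $\CO_K$. Let $E$ act on $A=C^2\times C^2$ as follows: $\sqrt2$ acts on each $C^2$ by $(z,w)\mapsto(2w,z)$, $K$ acts by its CM action on the first $C^2$, and $K$ acts by the conjugate action on the second. Then $\dim A=4=[E:\Q]$. Take $x=(P,0,0,0)$ with $P$ non-torsion. Then $\log_\omega(x)=0$ for every $\omega\in\Omega_\sigma$ with $\sigma|_K$ the conjugate embedding. Yet the smallest abelian subvariety containing a multiple of $x$ is $C\times 0\times 0\times 0$, which $\sqrt2$ moves. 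Without $E$-stability you can neither invoke \S\ref{ss, tf} nor speak of traces of $E$ on $t_B$, so the argument breaks exactly where $B$ is used.

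\textbf{The repair.} Take $B$ to be the smallest \emph{$E$-stable} abelian subvariety containing a non-zero multiple of $x$. This is the identity component of the Zariski closure of $\Gamma=\CO x$; the intersection argument in the proof of Theorem \ref{thm,mr3'} gives existence and uniqueness.
\begin{itemize}
\item It is contained in the $E$-stable subvariety furnished by Theorem \ref{thm,p-AST2}, so $t_B\subset W$. Tangent-space containment passes to smaller subvarieties, which also settles the worry in your last paragraph.
\item For $g\in\Gal(\ov{\BQ}/H)$, the subvariety $gB$ is $E$-stable, since $E$ acts over $L\subset H$, and it contains $mx$. Uniqueness then gives $gB=B$.
\end{itemize}
From there your descent and trace argument goes through.

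The paper avoids minimality altogether. It takes any $E$-stable $B$ from Theorem \ref{thm,p-AST2} and notes that $(B\cap gB)^0$ is a non-zero $E$-stable abelian subvariety. Such a subvariety has dimension at least $[E:\Q]/2=\dim B$, which forces $gB=B$.

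\textbf{A smaller slip.} Hypothesis (a) does not give $\dim V_\sigma=1$ when $E$ has non-real embeddings. For example, take $A=B\times B$ with $B$ of CM type $\Phi$ and $E$ acting diagonally; then $\dim V_\sigma=2$ for $\sigma\in\Phi$. So $W$ need not be $\bigoplus_{\tau\neq\sigma}V_\tau$. This is harmless: you only need $\ker\ell_\omega$ to be $E$-stable, which holds because $E$ acts on $\omega$ through $\sigma$.
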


\begin{remark}\label{rmk:impliesA}
Note that this theorem implies Theorem \ref{thm:A} by taking $E = F$, $L$ any field over which $A$ is defined and $H$ any extension of $L$ over which the given non-torsion point $x$ is defined. The key point is that the hypothesis in Theorem \ref{thm:A} that $F$ has a real embedding implies that $\theta$ has no extensions to $F\times F$ (see \S\ref{ss, tf}), so hypothesis (b) here is automatic.
\end{remark}

Our next main theorem is:

\begin{thm}\label{thm,mr2'} Suppose that 
\begin{itemize}
\item[(a)] $\dim(A) = [E:\BQ]$,
\item[(c)] $A$ admits complex multiplication 
by a quadratic extension $E'/E$ (not necessarily over $L$) with $E'$ a CM field.
\end{itemize}
Then for all non-torsion $x\in A(H)$, all $\sigma \in \Sigma_E$, and all $0\neq \omega \in \Omega_\sigma$ 
we have 
$$
\log_{\omega}(x)\neq 0.
$$  
\end{thm}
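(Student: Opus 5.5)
We plan to argue by contradiction with the $p$-adic analytic subgroup theorem, as for Theorem \ref{thm,mr'}; the role of hypothesis (b) there is played by the CM structure (c). Suppose $x\in A(H)$ is non-torsion and $\log_\omega(x)=0$ for some $0\neq\omega\in\Omega_\sigma$. By (a), $V_\sigma$ is one-dimensional, so $\ker\ell_\omega=\oplus_{\sigma'\neq\sigma}V_{\sigma'}$. Since the logarithm is $E$-equivariant, $\log_A(\theta(e)x)$ also lies in this kernel for every $e\in E$ (clearing denominators). Let $W\subset V$ be the smallest $\ov{\BQ}$-subspace whose $p$-adic extension contains $\log_A(Ex)$. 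Then $W$ is $E$-stable and misses $V_\sigma$.

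The $p$-adic analytic subgroup theorem (Fuchs--Pham / Matev form) gives an abelian subvariety $B\subset A_{\ov\BQ}$ with $t_B\subset W$ and $x\in B(\ov\BQ)+$torsion. Replace $B$ by the smallest abelian subvariety containing a multiple of $x$. Because $\log_A(\theta(e)x)\in W_p$ for all $e\in E$, the same holds for the smallest $E$-stable abelian subvariety containing $\theta(E)x$. Hence we may take $B$ to be $E$-stable, non-zero since $x$ is non-torsion, and proper since $t_B\cap V_\sigma=0$. By \S\ref{ss, tf}, $2\dim B=[E:\BQ]$, $E$ is CM, and $A\sim B\times B'$ $E$-equivariantly.

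We then use (c) to get a contradiction. Since $E'$ is CM, its maximal totally real subfield $E'^+$ has degree $[E:\BQ]=\dim A$, while $E$ itself is CM. Because $A$ has CM by $E'$, we have $A\sim C^m$ with $C$ simple CM and $\End^0(C)$ a CM field. The $E$-equivariant splitting forces $\End^0(A)$ to contain $E\times E$ compatibly with $E'\supset E$. We expect the comparison of CM types to be the main obstacle. The type $\Phi\subset\Sigma_{E'}$ of $A$ restricts to $\Sigma_E$ bijectively, since each $V_\sigma$ is a line. On the other hand, $t_B$ is the subspace of $V$ indexed by a subset $S\subset\Sigma_E$ with $|S|=[E:\BQ]/2$, and it must be stable under the commutant of $E$ in $\End^0(A)$, which contains $E'$.

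We first try to show that $B$ can be chosen $E'$-stable, by replacing $B$ with $B+\theta'(\alpha)B$ for $\alpha\in E'$. If $B$ is $E'$-stable, then $H_1(B,\BQ)$ is an $E'$-space of dimension $2\dim B=[E:\BQ]<[E':\BQ]$, which is impossible. If that replacement fails, we instead show $\theta'(\alpha)B\neq B$ produces a second $E$-stable complement. We then argue that $\log_A(Ex)$ and $\log_A(E'x)$ both avoid $V_\sigma$, and apply the subgroup theorem again to the $E'$-orbit, obtaining an $E'$-stable $B$ directly; this is the route we would pursue first. Since $x$ is non-torsion this contradicts $\dim B<\dim A$.
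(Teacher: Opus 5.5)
\textbf{There is a genuine gap at the one step where hypothesis (c) must be used.}

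Your endgame matches the paper's. A non-zero $E'$-stable abelian subvariety $B$ with $t_B\subset\ker\ell_\omega$ is impossible, since $[E':\BQ]=2\dim A$ would have to divide $2\dim B<2\dim A$. The gap is in producing such a $B$. Each route you list (Theorem~\ref{thm,p-AST2} applied with $E'$, enlarging $B$ to $B+\alpha B$, or taking the subspace spanned by $\log_A(E'x)$) needs $\ell_\omega(\log_A(\alpha x))=0$ for all $\alpha\in E'$. You only have this for $\alpha\in E$. The natural way to get it is to show that $\omega$ is an $E'$-eigenvector, so that $\ker\ell_\omega$ is $E'$-stable, and you never prove this.

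Your justification, ``by (a), $V_\sigma$ is one-dimensional'', is false. Hypothesis (a) only gives $\dim V=[E:\BQ]$, not that $V$ is free over $E\otimes\ov{\BQ}$; in the example below, $\dim V_\sigma\in\{0,2\}$. Your later claim that $\Phi$ restricts bijectively to $\Sigma_E$ is deduced from that same assertion, so it is circular. The fallback of a ``second $E$-stable complement'' gives no contradiction by itself.

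The paper obtains the missing step from (c). The $\tau\in\Sigma_{E'}$ with $e_\tau\Omega\neq 0$ form a CM type $\Phi$, and the paper asserts that each $\sigma$ with $\Omega_\sigma\neq 0$ has a unique extension $\tau\in\Phi$. Then $\Omega_\sigma=e_\tau\Omega$ is a line on which $E'$ acts through $\tau$, so $W=\ker\ell_\omega$ is $E'$-stable, and Theorem~\ref{thm,p-AST2} applies with $E'$ in place of $E$.

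That uniqueness, however, is automatic only when the non-trivial automorphism of $E'/E$ is complex conjugation, i.e.\ when $E=E'^{+}$ is totally real. In that case the two extensions of $\sigma$ are $\tau$ and $\ov{\tau}$, and a CM type contains exactly one of them. When $E$ is CM it can fail, and so can the statement. Here is an example:
\begin{itemize}
\item Take $C\colon y^2=x^3-x$ with $[i](x,y)=(-x,iy)$, and $A=C\times C$ over $L=\BQ(i)$, with $E=\BQ(i)$ acting diagonally.
\item Let $E'=\BQ(\zeta_8)$ act via $\zeta_8\mapsto\big((u,v)\mapsto(v,[i]u)\big)$, whose square is the diagonal $[i]$. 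Then (a) and (c) hold.
\item With $\omega_j$ the pullback of $dx/y$ from the $j$-th factor, $\zeta_8$ acts on $\Omega$ by $\omega_1\mapsto\omega_2$, $\omega_2\mapsto i\omega_1$. Its eigenvalues $\pm\zeta_8$ both square to $i$, so both elements of $\Phi$ lie over the same $\sigma$, and $\Omega_\sigma=\ov{\BQ}\omega_1\oplus\ov{\BQ}\omega_2$.
\item For non-torsion $P\in C(H)$, the point $x=(P,P)$ and $\omega=\omega_1-\omega_2$ give $\log_\omega(x)=0$.
\end{itemize}

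So the missing step cannot be proved from (a) and (c) alone. It needs an extra hypothesis such as $\dim\Omega_\sigma\le 1$ for all $\sigma$ (automatic if $E$ is totally real). Under that hypothesis $E'$ preserves the line $V_\sigma$, and your final route goes through as written.
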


We can also partly deal with cases where hypothesis (b) of Theorem \ref{thm,mr'} does not hold.

\begin{thm}\label{thm,mr3'}
Suppose
\begin{itemize}
\item[(a)] $\dim(A)=[E:\Q]$.
\end{itemize}
Let $x\in A(H)$ be a non-torsion point. Then there exists a smallest $E$-stable abelian subvariety $B\subset A$ (not necessarily defined over $L$) containing some multiple $mx$ of $x$ for some integer $m\neq 0$.
Furthermore, either (i) $B=A$, or (ii) $2\dim(B)=\dim(A)$. Also, 
for all $\sigma$ and all $\omega\in \Omega_\sigma$ such that $\ell_\omega|t_{B}\neq 0$, 
$$
\log_\omega(x)\neq 0.
$$
\end{thm}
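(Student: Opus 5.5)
Existence of $B$ comes first. The family of $E$-stable abelian subvarieties of $A$ containing some nonzero multiple of $x$ is nonempty, since $A$ itself belongs to it. It is also closed under intersection: take identity components of intersections, and note that if $m_1x\in B_1$ and $m_2x\in B_2$ then $m_1m_2x\in B_1\cap B_2$, hence some multiple lies in the identity component because the component group is finite. By dimension there is therefore a unique minimal such $B$. The point $mx$ is non-torsion, so $B\neq 0$. If $B\neq A$, then $B$ is a proper nonzero $E$-stable subvariety, and the argument of \S\ref{ss, tf} ($H_1(B(\C),\Q)$ is an $E$-vector space of dimension $2\dim B<2[E:\Q]$, nonzero) gives $2\dim(B)=[E:\Q]=\dim(A)$. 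This settles the dichotomy (i)/(ii).

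For the nonvanishing, I argue by contradiction using the $p$-adic analytic subgroup theorem, in the form: if $y\in G(\ov{\BQ})$ for a commutative algebraic group $G/\ov{\BQ}$, and $\log_G(y)$ lies in a $\ov{\BQ}$-rational subspace $W\subset t_G$ (base changed to $\ov{\BQ}_p$), then some nonzero multiple of $y$ lies in a connected algebraic subgroup $G'$ with $t_{G'}\subset W$. Apply this with $G=B$ and $y=mx$. The logarithm of $B$ is the restriction of $\log_A$, so $\log_A(mx)=m\log_A(x)\in t_B\otimes\ov{\BQ}_p$. Suppose $\omega\in\Omega_\sigma$ satisfies $\ell_\omega|_{t_B}\neq0$ but $\log_\omega(x)=0$. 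Put $W=\{v\in t_B\otimes\ov{\BQ}: \ell_{e\omega}(v)=0 \ \forall e\in E\}$. Since $\ell_{e\omega}=\sigma(e)\ell_\omega$ and $\log_A$ is $E$-equivariant (the logarithm commutes with endomorphisms), we get $\log_{e\omega}(ex)$-compatibility, and hence $\ell_{e\omega}(\log_A(mx))=\sigma(e)\log_\omega(mx)=0$. So $\log_A(mx)\in W$. The subspace $W$ is $E$-stable and $\ov{\BQ}$-rational, being the kernel of $\ell_\omega$ restricted to $t_B$ (this kernel is already $E$-stable since $\omega\in\Omega_\sigma$). It is a proper subspace of $t_B$ because $\ell_\omega|_{t_B}\neq0$.

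The theorem then yields a connected algebraic subgroup $B_0\subset B$ with $t_{B_0}\subset W$ containing a nonzero multiple of $mx$. I pass to the smallest such subgroup, or take the intersection of its $E$-translates. Since $W$ is $E$-stable and the analytic-subgroup conclusion can be applied to the $E$-stable hull, I take $B_1=\sum_{e\in\mathcal O} e B_0$ where $\mathcal O$ is an order acting on $A$. Its tangent space $\sum e\,t_{B_0}\subset W$ is still proper in $t_B$, and $B_1$ is an $E$-stable abelian subvariety (a connected subgroup of an abelian variety is an abelian variety) containing a multiple of $x$. This contradicts the minimality of $B$.

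The main obstacle is this last step: the subgroup produced by the analytic subgroup theorem need not be $E$-stable, and one must make sure that forming its $E$-hull keeps the tangent space inside $W$. This is exactly where $E$-stability of $W$ is essential, since $t_{eB_0}=e\,t_{B_0}\subset eW=W$. One must also check the finiteness of the sum, which holds because $\mathcal O$ is finitely generated as a $\BZ$-module, so finitely many generators suffice.
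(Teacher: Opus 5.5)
Your proof is correct and follows essentially the paper's route. Existence comes from identity components of intersections, the dichotomy from the dimension count of \S\ref{ss, tf}, and the non-vanishing from the $p$-adic analytic subgroup theorem with $W=\ker(\ell_\omega|_{t_B})$ followed by the $\mathcal{O}$-hull, which is exactly how the paper passes from Theorem~\ref{thm,p-AST} to Theorem~\ref{thm,p-AST2}. The only difference is that you draw the contradiction directly from the minimality of $B$, whereas the paper uses that (when $B\neq A$) $B$ has no nonzero proper $E$-stable abelian subvarieties; your version has the mild advantage of covering the case $B=A$ by the same argument.
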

The conclusion of this theorem can be rephrased as: $\log_{\omega'}(x)\neq 0$ for all $\sigma$ and all $0\neq\omega' \in \Omega'_\sigma$, where $\Omega' = \Omega_{B/\ov{\BQ}}^1 = \oplus_\sigma \Omega'_\sigma$ and $\log_{\omega'}$ is the associated $p$-adic logarithm of $B$.

\begin{remark} If $E$ is a CM field, then the $\sigma$'s such that 
$\Omega'_\sigma\neq 0$ comprise a CM-type $\Phi\subset\Sigma_E$.
Of course, it could be that there are $\sigma$'s in the complement $\ov\Phi$ such that $\Omega_\sigma\neq 0$ (for example if $A = B\times B'$ with $B'$ having CM by $E$ with CM-type $\ov{\Phi}$).
For such $\sigma$ and $\omega\in \Omega_\sigma$ we will always have $\log_\omega(x) = 0$.
\end{remark}

\subsection{Applications to $\GL_2$-type abelian varieties}
We have already noted that Theorem \ref{thm,mr'} implies Theorem \ref{thm:A} and hence that the answer to the first two displayed questions in the introduction is `yes.' We now deduce further consquences related to the analogous questions for twists of Heegner points by finite order Hecke characters. 

Let $A_0/L$ be an abelian variety over a totally real field $L$ and suppose that there is an embedding $\theta_0:F\hookrightarrow \End^0_L(A_0)$
of a totally real field $F$ such that $\dim(A_0)=[F:\Q]$.
Let $\CO = F\cap \End_L(A_0)$; this is an order in $F$. Replacing $A_0$
with $\Hom_{\CO}(\CO_F,A_0)$ if necessary, we can assume that $\CO$ is the maximal order $\CO_F$ (the ring of integers of $F$). This amounts to replacing $A_0$ with another abelian variety in its $L$-isogeny class. Let $E/F$ be either a totally real or CM field containing $F$ (not necessarily a quadratic extension). Let 
$$
A=A_0\otimes_{\CO_F}\CO_E.
$$
This is an abelian variety over $L$.
Then $\theta_0$ extends to an embedding $\theta:E\hookrightarrow \Hom^0_L(A)$: the order $\CO_E$ acts in the obvious way on
$A_0\otimes_{\CO_F}\CO_E$, that is, by multiplication on the second factor.

Let $K/L$ be a finite extension and $H/K$ a finite abelian extension. 
Let $\chi:\Gal(H/K)\rightarrow \CO_E^\times$ be a character (which is either trivial or quadratic if $E$ is totally real, but otherwise has no restriction on its order). Let
$y\in A_0(H)$ be a non-torsion point. We then consider
$$
x = \sum_{\tau\in \Gal(H/K)} \tau(y)\otimes\chi(\tau)\in A_0(H)\otimes_{\CO_F}\CO_E = A(H).
$$
Suppose $x$ is non-torsion. 

\begin{thm}\label{thm,nv}
Suppose one of the following holds:
\begin{itemize}
    \item[(i)] $E$ is totally real, 
    \item[(ii)] $A_0$ does not have CM by a CM extension of $F$ contained in $E$ (over any extension of $L$), 
    \item[(iii)] $A_0$ has CM by a CM extension of $F$ contained in $E$, but $H$ does not contain the corresponding reflex field.
\end{itemize}
Then for all $\sigma$ and all $0\neq \omega\in\Omega_\sigma$,
$$
\log_\omega(x)\neq 0.
$$
\end{thm}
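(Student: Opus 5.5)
The plan is to deduce Theorem \ref{thm,nv} from Theorems \ref{thm,mr'} and \ref{thm,mr2'}, applied to $A=A_0\otimes_{\CO_F}\CO_E$ with the action $\theta$ of $E$. First note that $\dim(A)=[E:F]\dim(A_0)=[E:F][F:\Q]=[E:\Q]$. So hypothesis (a) of those theorems holds. The point $x$ lies in $A(H)$ and is non-torsion by assumption. It therefore remains to check hypothesis (b) of Theorem \ref{thm,mr'} in each case, or else to invoke Theorem \ref{thm,mr2'}, or Theorem \ref{thm,mr3'} together with a direct argument.

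\emph{Case (i).} If $E$ is totally real, it has a real embedding. By \S\ref{ss, tf}, no proper nonzero $E$-stable abelian subvariety exists, so $\theta$ admits no extension to $E\times E$. Hypothesis (b) is then vacuous, and Theorem \ref{thm,mr'} gives the claim.

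\emph{Case (ii).} Suppose $\theta$ extends to $E\times E$, with $E$ CM. By \S\ref{ss, tf} there is a proper $E$-stable abelian subvariety $B\subset A$ with $2\dim B=[E:\Q]$, so $B$ has CM by $E$. We need to transport this structure back to $A_0$. Over $\ov{\BQ}$ we have $A\sim A_0^{[E:F]}$ as abelian varieties, and $F$ acts on $A_0$ with $\dim A_0=[F:\Q]$. So $A_0$ is isotypic: it is isogenous to a power of a simple $C$, or is itself simple. The subvariety $B$ is isogenous to a power of the simple factors of $A_0$. We then argue that $\End^0(A_0)\supset F$, together with the existence of an $E$-stable $B$ of half dimension, forces $A_0$ to have CM (complete multiplication). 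The idea is this. Since $B\subset A_0^{n}$, $B$ is isogenous to a power of a simple factor $C$ of $A_0$. As $B$ has CM, $C$ has CM, hence $A_0$ has CM. The CM field of $A_0$ can then be taken to be a CM extension of $F$ inside $E$: look at the image of $E$ in $\End^0(B)$, intersect it appropriately, and use the $F$-linearity of $\Hom_{\CO_F}$-structures, so that the commutant of $F$ in $\End^0(A_0)$ contains a CM quadratic extension $F\cdot(\text{something in }E)$. This contradicts (ii), so no extension exists, and Theorem \ref{thm,mr'} applies.

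\emph{Case (iii).} $A_0$ has CM by a CM field $E_0$ with $F\subset E_0\subset E$, and the extension to $E\times E$ does exist. Then hypothesis (b) asks that $H$ not contain the trace field. By Remark \ref{rmk, tf} this trace field is $L$ composed with the reflex field of the CM-type of $E\times E$ acting on $A$. The relevant CM-type on $B\subset A=A_0\otimes\CO_E$ is the CM-type of $A_0$ (for $E_0$) induced up to $E$. The reflex field of an induced CM-type equals the reflex field of the original type. Hence the trace field is $L$ times the reflex field of $A_0$, and (iii) gives (b). Again Theorem \ref{thm,mr'} applies.

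The main obstacle is case (ii): rigorously showing that an $E\times E$-extension on $A_0\otimes\CO_E$ produces CM on $A_0$ by a CM extension of $F$ contained in $E$. I would first try to compute $\End^0(A)$ as $\End^0(A_0)\otimes_F E$ when $\End^0(A_0)$ commutes appropriately. An extension to $E\times E$ amounts to a nontrivial idempotent in the commutant of $E$, which is $C_{\End^0(A_0)}(F)\otimes_F E$. If $A_0$ is not CM in the required sense, this commutant is $F$ itself or a quaternion/CM algebra over $F$ that does not split appropriately over $E$. The commutant of $E$ is then a field $E$ or a division algebra, with no idempotents. Establishing this requires an analysis of the possible commutants $C(F)$, which is the step I expect to be the most delicate.
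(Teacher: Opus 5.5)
Your reduction is the paper's: check $\dim A=[E:\Q]$, then verify hypothesis (b) of Theorem~\ref{thm,mr'} case by case. Cases (i) and (iii) are fine in outline. In (iii) you still need to say why $B$ is $E$-isogenous to $A_0\otimes_{\CO_{E_0}}\CO_E$ or its conjugate; this follows from $A\sim A_1\times A_2$ together with the uniqueness statement in \S\ref{ss, tf}, and complementary CM-types have the same reflex field.

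The gap is case (ii), which carries the real content. Knowing that $A_0$ is of CM type is not enough. Hypothesis (ii) only excludes CM by a CM \emph{quadratic extension of $F$ lying in $E$}, and your step producing such a field (``intersect it appropriately'', ``$F\cdot(\text{something in }E)$'') is not an argument. Your fallback commutant plan is left undone. As you state its dichotomy, it would also not close: a quaternion $C(F)$ split by $E$ would give idempotents in $C(F)\otimes_F E$ without providing a CM extension of $F$ inside $E$.

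The missing idea in the paper is a maximal-commutativity argument.
\begin{itemize}
\item Write $A_0\sim B_0^r$; this holds because $F$ is totally real, so $A_0$ has no proper $F$-stable subvariety.
\item Then $B\sim B_0^{rd}$ with $2d=[E:F]$, and $B_0$ is simple CM with $E_0=\End^0(B_0)$ a CM field.
\item Since $[E:\Q]=2\dim B$, $E$ is maximal commutative in $\End^0(B)\cong M_{rd}(E_0)$, hence contains the center $E_0$.
\item So $E'=E_0F$ is a subfield of $E$ and a commutative subalgebra of $\End^0(A_0)\cong M_r(E_0)$.
\item The inequalities $r[E_0:\Q]\ge[E':\Q]>[F:\Q]=r[E_0:\Q]/2$ force $[E':F]=2$ with $E'$ CM, contradicting (ii).
\end{itemize}

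Your commutant route can also be completed, and it even yields an ``iff''.
\begin{itemize}
\item $C(F)$ acts faithfully and $F$-linearly on $H_1(A_0,\Q)\cong F^2$, so $C(F)\subset M_2(F)$.
\item It has no nontrivial idempotent and no nonzero nilpotent, since either would cut out a proper nonzero $F$-stable abelian subvariety.
\item Hence $C(F)$ is $F$ or a quadratic field $M/F$, and $M$ is CM by the footnote argument of \S\ref{ss, tf}.
\item Then $C(F)\otimes_F E$ has a nontrivial idempotent if and only if $C(F)=M$ embeds in $E$ over $F$. In particular the quaternion case never occurs.
\end{itemize}
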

We can rewrite this conclusion as follows:
Let $\sigma' = \sigma|F$.
The differential $\omega \in \Omega_\sigma$ maps to a differential
$\omega_0\in\Omega_{0,\sigma'}$ for $\Omega_0 = \Omega^1_{A_0}\otimes_{L}\ov{\BQ}$.  Let $\chi^\sigma$ be the $\ov{\BQ}^\times$-valued character defined by composition with $\sigma$,
which we also view as being $\ov{\BQ}_p$-valued via $\iota_p$.
Then
$$
\sum_{\tau\in\Gal(H/K)}\chi^\sigma(\tau) \log_{\omega_0}(\tau(y)) \neq 0.
$$

There is also a reinterpretation that has particular interest for
modular forms (see \S\ref{ss,ex} below). Let $\Gal(\ov{\BQ}/K)$ act on $A_0\otimes_{\CO_F}\CO_E$
via its usual action on $A_0$ and by multiplication by $\chi$ on the $\CO_E$-factor. This defines a $K$-twist\footnote{The character $\chi:\Gal(\ov{\BQ}/K)\rightarrow \CO_E^\times$ defines
a homomorphism into the automorphism group of $A$ and so an element
of $H^1(K,\Aut(A))$; $A_\chi$ is just the corresponding twisted form over $K$ (see also \cite{MRS}).} $A_\chi$ of the abelian variety
$A$. Then $x \in A_\chi(K)$; the conclusion is exactly the same.

\begin{proof}
If $E$ is totally real, then this is just a special case of Theorem \ref{thm:A} and hence follows from Theorem \ref{thm,mr'}. 

Suppose then that $E$ is CM. We will show that the hypothesis that $A_0$ does not admit CM by a CM extension of $F$ shows that $\theta$ does not extend to an embedding $E\times E\hookrightarrow \End^0(A)$ and so the conclusion again follows from Theorem \ref{thm,mr'}.

We argue by contradiction, working over $\ov{\BQ}$. 
Suppose $\theta$ extends to an embedding of $E\times E$ into $\End^0(A)$. Then, as noted in \S\ref{ss, tf}, there exists
an $E$-stable abelian subvariety $B\subset A$. 
As $2\dim(B) = \dim(A) = [E:\Q]$, $B$ has CM by $E$.
As $\dim(A_0) = \deg(F)$, it follows that $$A_0 \sim B_0^r$$ for some simple abelian variety $B_0$. But then $A \sim B_0^{2rd}$ where $2d = [E:F]$.
It follows that $B\sim B_0^{rd}$ and so $B_0$ has CM by a subfield $E_0$ of $E$ such that $\End^0(B_0)=E_0$. Hence $F$ is embedded into $\End^0(A_0) = \End^0(B_0^r) = M_r(E_0)$. Then $E'= E_0F$ is a commutative subalgebra of $M_r(E_0)$. As $r[E_0:\Q]\geq [E':\Q]>[F:\Q] = r[E_0:\Q]/2$, it follows that $[E':\Q] = r[E_0:\Q] = 2\dim(A_0) = 2[F:\Q]$.  From this it easily follows that $E'/F$ is a CM extension
and that $F\cap E_0$ is the totally real subfield. In particular,
the field $E'$ can be identified with a subfield of $E$ and $A_0$ has
CM by $E'$.

It remains to deal with case (iii). By Theorem \ref{thm,mr'} we can 
assume that $\theta$ extends to an embedding $E\times E\hookrightarrow \End^0(A)$. It then suffices -- again by Theorem \ref{thm,mr'} -- to show that the trace field of this extension contains the reflex field of $A_0$ (cf.~ Remark~\ref{rmk, tf}). Appealing to the preceding paragraph, we see that the trace field is the reflex field of $B$, which is just the reflex field of $B_0$.
Similarly, the reflex of $A_0$ is also the reflex field of $B_0$.
\end{proof}

It is also possible to formulate a result in the case that 
$A_0$ has CM by a CM-extension $E_0/F$ that embeds in $E$
and $H$ contains the reflex field of this action. 
Replacing $A_0$ by an isogeneous abelian variety we may assume that 
$A_0$ has an action by the ring of integers $\CO_{E_0}$.
Let
$$
A_1 = A_0\otimes_{\CO_{E_0}}\CO_E \ \ \text{and} \ \ 
A_2 = A_0^c\otimes_{\CO_{E_0}}\CO_E,
$$
where $A_0^c$ is the abelian variety such that the CM type has been replaced with its composition with complex conjugation (that is, 
$A_0^c = A_0\otimes_{\CO_{E_0},x\mapsto\ov{x}}\CO_{E_0}$).
The inclusion 
$$
\CO_{E_0}\otimes_{\CO_F}\CO_E \hookrightarrow \CO_E\times\CO_E, \ \
a\otimes b\mapsto (ab,\ov{a}b)
$$
induces an isogeny 
$$
A \sim A_1\times A_2.
$$
The abelian varieties $A_1$ and $A_2$ and this isogeny are defined
over the reflex field of $A_0$ for the action of $\CO_{E_0}$.
The respective CM-types $\Phi_1$ and $\Phi_2$ of $A_1$ and $A_2$ are complementary, that is, $\Phi_2= c\circ\Phi_1$ for $c$ the complex conjugation arising from $\iota_\infty$. It is then clear that
$\Omega_{A_i}\otimes_{E}\ov{\BQ}$ is identified with $\oplus_{\sigma\in \Phi_i}\Omega_\sigma$.

If $H$ contains the reflex field of the $E_0$-action on $A_0$, then
it could happen that $x\in A(H)$ has a non-torsion projection to 
one of $A_1$ or $A_2$ and not to the other.

\begin{thm}\label{thm, nv2} Suppose $A_0$ has CM by a CM-extension $E_0/F$ that embeds in $E$
and $H$ contains the reflex field of this action. 
If $x\in A(H)$ is non-torsion and has non-torsion projection to $A_i(H)$,
then 
$$
\log_\omega(x) \neq 0
$$
for all $\sigma \in \Phi_i$ and all $\omega\in\Omega_\sigma$.
\end{thm}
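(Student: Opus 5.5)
The plan is to reduce Theorem \ref{thm, nv2} to Theorem \ref{thm,mr3'} (or directly to Theorem \ref{thm,mr'}) applied to the factor $A_i$, using the isogeny $A\sim A_1\times A_2$. The isogeny is defined over the reflex field, which $H$ contains, so it makes sense over $H$. Concretely, let $\pi_i: A\to A_i$ be the $E$-equivariant projection coming from the isogeny. Compatibility of $p$-adic logarithms with homomorphisms gives $\log_{A_i}(\pi_i(x)) = d\pi_i(\log_A(x))$. Moreover, $d\pi_i$ identifies $\oplus_{\sigma\in\Phi_i}V_{p,\sigma}$ with $t_{A_i}\otimes\ov{\BQ}_p$ and kills the $V_{p,\sigma}$ for $\sigma\notin\Phi_i$. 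This is exactly the identification $\Omega_{A_i}\otimes_E\ov{\BQ}=\oplus_{\sigma\in\Phi_i}\Omega_\sigma$ recorded above.

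Hence, for $\sigma\in\Phi_i$ and $0\neq\omega\in\Omega_\sigma$, we have $\omega=\pi_i^*\omega_i$ with $0\neq \omega_i\in\Omega^1_{A_i,\sigma}$, and therefore $\log_\omega(x)=\log_{\omega_i}(\pi_i(x))$. So it suffices to show that $\log_{\omega_i}(x_i)\neq 0$ for the non-torsion point $x_i=\pi_i(x)\in A_i(H)$ and all nonzero $\omega_i$ in each $\sigma$-component of $A_i$.

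Now $A_i$ has CM by $E$ and $2\dim A_i=[E:\Q]$. I would apply Theorem \ref{thm,mr3'} to $A$ itself, with $x$ replaced by the point $x_i$ viewed in $A$ via a section of the isogeny (after multiplying by an integer). Let $B$ be the smallest $E$-stable abelian subvariety of $A$ containing a multiple of this point. Since the point lies in the $E$-stable subvariety corresponding to $A_i$, we have $B\subseteq A_i$. As $A_i$ has CM by the field $E$ with $2\dim A_i=[E:\Q]$, it is $E$-simple: any nonzero $E$-stable abelian subvariety has $H_1$ a nonzero $E$-space, so it has dimension at least $[E:\Q]/2$. Thus $B=A_i$ up to isogeny. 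Theorem \ref{thm,mr3'} then gives $\log_\omega\neq 0$ for every $\omega$ with $\ell_\omega|_{t_{A_i}}\neq0$, and these are exactly the nonzero elements of $\Omega_\sigma$ for $\sigma\in\Phi_i$.

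The main obstacle is bookkeeping: making the identification of $t_{A_i}$ with $\oplus_{\sigma\in\Phi_i}V_\sigma$ precise. This needs the $\CO_E$-action on $A_i$ to be compatible with $\theta$ through the map $a\otimes b\mapsto(ab,\ov a b)$, so that the $E$-eigenspaces match and the CM types are $\Phi_1$ and $c\circ\Phi_1$ as stated. In particular, one must check that each $V_\sigma$ is one-dimensional and lies entirely in one of the two factors. This holds because $\dim A=[E:\Q]$ with $V$ free of rank one over $E\otimes\ov{\BQ}$ after base change, and each factor's tangent space is cut out by its CM type.
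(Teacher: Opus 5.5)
Your argument is correct, but it goes a different way from the paper's. The paper applies Theorem \ref{thm,mr2'} to $A_i$ itself. There the maximal totally real subfield $E^+\subset E$ plays the role of the field in hypothesis (a), since $\dim A_i=[E^+:\Q]$, and $E$ is the CM field in (c). Combined with the compatibility $\log_\omega(x)=\log_{\omega_i}(\pi_i(x))$ from your first two paragraphs, this finishes the proof at once.

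You instead push $\pi_i(x)$ back into $A$ via a section $s_i$ and apply Theorem \ref{thm,mr3'} there. You use the $E$-simplicity of $A_i$ to identify the minimal subvariety with $s_i(A_i)$. To get back to $x$ you need $\pi_i\circ s_i=n$, which gives $\log_\omega(s_i(\pi_i(x)))=n\log_\omega(x)$ for $0\neq\omega\in\Omega_\sigma$, $\sigma\in\Phi_i$. You leave this step implicit.

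Both routes rest on the same input, fed into Theorem \ref{thm,p-AST2}: a nonzero $E$-stable abelian subvariety of $A_i$ is all of $A_i$. This is exactly the divisibility argument inside the proof of Theorem \ref{thm,mr2'}.

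The paper's route is more economical. Yours keeps everything on $A$ with its original decomposition $\Omega=\oplus_\sigma\Omega_\sigma$, and in that form the detour through $\pi_i(x)$ is unnecessary. Apply Theorem \ref{thm,mr3'} directly to $x$. The minimal $E$-stable $B$ containing a multiple of $x$ has nonzero image under $\pi_i$, hence surjective image by $E$-simplicity of $A_i$. So $d\pi_i(t_B)=t_{A_i}$, and $\ell_\omega|_{t_B}\neq 0$ for every $0\neq\omega=\pi_i^*\omega_i$.

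One small correction to your last paragraph. Freeness of $V$ over $E\otimes\ov{\BQ}$ does not follow from $\dim A=[E:\Q]$ alone: take $B\times B$ with $B$ of CM type $\Phi$, where $\dim V_\sigma=2$ for $\sigma\in\Phi$. Here freeness comes from $A=A_0\otimes_{\CO_F}\CO_E$ with $F$ totally real, or from the complementarity of $\Phi_1$ and $\Phi_2$.
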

This is a more precise version of Theorem \ref{thm,mr3'}.

\begin{proof} This just follows from applying Theorem \ref{thm,mr2'}
to $A_i$.
\end{proof}

\begin{remark} It was an example of the set-up of this last theorem that 
partly provoked us to take a closer look at the questions of the introduction.  See \ref{ss,ex} for this example.
\end{remark}

\subsubsection{An Example}\label{ss,ex}
Our consideration of the questions addressed in this note was partly motivated by the following example, which comes up in \cite{BT,BCST}. 

%In the above notation suppose that $L=\mathbb{Q}$ and 
Let $K$ be an imaginary quadratic field. Let $\psi$ be a self-dual\footnote{That is, the composition $\psi^c$ of $\psi$ with the non-trivial automorphism of $K$ equals $\psi^{-1}\mathrm{N}_K$.}
Hecke character over $K$ of infinity type $(1,0)$ and $g$ the associated CM newform of weight two. Let $F_0$ be the Hecke field of $g$, which is totally real since $g$ has trivial central character. 
% $g$ be a weight two newform with trivial central character and with 
%Suppose $g$ is a weight $2$ modular form with trivial central character and complex multiplication by $K$. Let $\psi_g$ be the CM character of $K$ associated to $g$. 
Let $A_0$ be an abelian variety over $\Q$ in the associated isogeny class. The abelian varieties in this isogeny class all have CM by $E_0 = K\cdot F_0$; the CM action on $A_0$ is
defined over the reflex field $K$ of $A_0$. We may choose $A_0$ so that it has an action of the maximal order $\CO_{E_0}$ of $E_0$ (that is, the ring of integers). 
%be an associated $\mathrm{GL}_2$-type abelian variety over $\mathbb{Q}$ with complex multiplication by an order of the CM field  
%$F':=K\cdot F_g$.  Upon replacing $A_g$ by an isogenous abelian variety, we may assume that $A_g$ has CM by the integer ring $\mathcal{O}_{F'}$.
 %where $F_g$ is the Hecke field of $g$ which is totally real (since $g$ has trivial character). 
 Let $\chi$ be a finite order anticyclotomic Hecke character over $K$. 
 %Let the CM field $F'=F_g\cdot K'$. As before 
 %Put $E=F'(\Im(\chi))$, which is a CM field. 
 Let $E$ be the finite extension of $E_0$ generated by the values of $\chi$.

Put 
$$
A = A_0\otimes_{\CO_F}\CO_E, \ \ A_1 = A_0\otimes_{\CO_{E_0}}\CO_E,
\ \ \text{and} \ \ A_2 = A_0^c\otimes_{\CO_{E_0}}\CO_E,
$$
where $A_0^c$ denotes the change of the CM-action on $A_{0}$ by its composition with complex conjugation 
As noted before, the inclusion $\CO_{E_0}\otimes_{\CO_{F'}}\CO_E \hookrightarrow \CO_E\times \CO_E$
induces an isogeny 
$A \sim A_1 \times A_2$.
The abelian varieties and this isogeny are all defined over $K$.
The abelian varieties $A_1$ and $A_2$ both have CM by $E$ but their
CM-types are complementary. Let $\Phi_i$ be the CM-type of $A_i$.

As noted following Theorem \ref{thm,nv}, the abelian variety $A$ has a $K$-twist $A_\chi$ obtained from letting $G_K = \Gal(\ov{\BQ}/K)$ act on $\CO_E$ as multiplication by the Galois character associated with $\chi$ via class field theory. The decomposition $A \sim A_1\times A_2$ becomes
$A_\chi\sim A_{1,\chi}\times A_{2,\chi}$, with $A_{i,\chi}$ the similar twist. The $L$-functions of these abelian variety are
$$
L(A_{1,\chi},s) = \prod_{\sigma\in \Gal(E/K)} L(\sigma\circ(\psi\chi),s)
\ \ \text{and} \ \ 
L(A_{2,\chi},s) = \prod_{\sigma\in \Gal(E/K)} L(\sigma\circ(\psi^c\chi),s).
$$

Suppose now that $$L(\psi^c\chi,1)\neq 0.$$ Then this also holds
for each $L(\sigma\circ(\psi^c\chi),1)$. Then we expect from the Birch--Swinnerton-Dyer (BSD) conjecture that $A_{2,\chi}(K)$ is finite. 
And, indeed, this is known to follow from the non-vanishing of the $L$-values by theorems of Coates--Wiles \cite{CoWi} and Rubin \cite{Ru'}.  In particular, if
$x\in A_\chi(K)$ is any non-torsion point, then $x$ has torsion image in $A_{2,\chi}$ and so must have non-torsion image in $A_{1,\chi}$. In particular, $\log_\omega(x)=0$ for all $\sigma\in\Phi_2$ and all 
$\omega\in \Omega_\sigma$, and it follows from Theorem \ref{thm, nv2} that $\log_\omega(x)\neq 0$ for all $\sigma\in\Phi_1$ and $\omega\in \Omega_\sigma$.

If we further assume that $\mathrm{ord}_{s=1}L(\psi\chi)=1$, then the Heegner point
$x_\chi \in A_\chi(K)$ is non-torsion by the Gross--Zagier formula \cite{YZZ}, so the preceding applies to $x_\chi$. Note also
that if $H$ is a ring class field such that $\chi$ is a character
of $\Gal(H/K)$ and if $y\in A(H)$ is the Heegner point, then 
$$
x_\chi  = \sum_{\tau\in\Gal(H/K)} \tau(y)\otimes\chi(\tau) 
= \sum_{\tau\in \Gal(H/K)} \tau(y\otimes 1) \in A_\chi(K).
$$

\subsubsection{Towards the $p$-part of the BSD formula for $\GL_2$-type abelian varieties}\label{ss, p-BSD} 

Let $A = B_f$ with $B_f$ an abelian variety over $\BQ$ associated
to a modular newform $f\in S_2(\Gamma_0(N))$, as in the Introduction.
Let $F$ be its Hecke field, that is, the number field generated
by the Fourier coefficients $a_n(f)$ of $f$ (viewed as a subfield of $\ov{\BQ}$ via $\iota_\infty$). This is a totally real field, $\dim(A)=[F:\Q]=:d$, 
and there is an embedding $\theta: F\hookrightarrow \End^0_{\BQ}(A)$.
Replacing $A$ by an isogenous variety, we may assume that $F\cap\End(A) = \CO_F$, the ring of integers of $F$. 

An arithmetic consequence of Theorem~\ref{thm:A} is the following.

%\begin{cor}\label{cor,nv}
%For a $\GL_2$-type abelian variety $A/K$ as above, let \( x \in A(K) \) be a non-torsion point. For any embeddings $\sigma: F \hookrightarrow \ov{\BQ}_p$ and $\tau: K_v \hookrightarrow \overline{\mathbb{Q}}_p$, we have
%$$ \log_{A,\sigma}^{\tau}(x) \neq 0 .$$  In particular, the image of \( x \) in \( H_{\rm f}^1(K, V_{^\sigma \pi}) \) is non-zero.
%\end{cor}
%
%In particular, the corollary applies to Heegner points on a $\GL_2$-type modular abelian variety (cf.~\cite{GZ,YZZ}). Such a refined non-vanishing of the $p$-adic logarithm of Heegner points is an ingredient in a recent result towards the $p$-part of the conjectural Birch and Swinnerton-Dyer formula\footnote{The non-vanishing is asserted in \cite[Cor.~2.6.2]{S}, but not used in the proofs of main results therein.} for certain $\GL_2$-type abelian varieties over $\BQ$ 
%(cf.~\cite[Thm.~11.12]{BSTW}:  
%
\begin{thm}\label{p-BSD}
Let $p\nmid 2N$ be a prime and let $\lambda\mid p$ be a prime of $F$.
Let $T$ be the $\lambda$-adic Tate module of $A$.
Suppose:
 \begin{itemize}
 \item[(i)] Either $\lambda\nmid a_{p}(f)$ or $a_{p}(f)=0$.
\item[(ii)] If $\lambda\nmid a_{p}(f)$ then the associated mod $p$ Galois representation  
$\ov{\rho}: G_{\BQ}\ra \Aut_{\cO_{F_\lambda}}\ov{T}$ is absolutely irreducible and  ramified at a prime $q|| N$. 
If $a_{p}(f)=0$, then $N$ is square-free.
% In the non-CM case assume that \eqref{ram} holds.
% holds for the associated $\lambda$-adic Galois representation $\rho:G_{\BQ} \ra \Aut_{\cO} T$. 
\end{itemize}
If ${\rm ord}_{s=1}L(f,s)=1$, then 
the $\lambda$-part of the Birch and Swinnerton-Dyer conjecture for $A/\BQ$ holds true, i.e., $\rank_{\BZ}A(\BQ)=[F:\Q]$, $\Sha(A)[\lambda^{\infty}]$ is finite and 
$$
\bigg{|} \frac{L^{(d)}(A,1)}{d!\cdot \Omega_{A}\cdot R(A)}
\bigg{|}^{-1}_{\lambda}
=
\big{|}
\# \Sha(A)[\lambda^{\infty}] \cdot \prod_{\ell | N} c_{\ell}(A)
\big{|}^{-1}_{\lambda},
$$ 
where
\begin{itemize}
\item[--] $\Omega_A$ is the N\'eron period of $A$, 
\item[--] $R(A)$ is the regulator of the N\'eron--Tate height pairing on $A(\Q)$, 
\item[--] $c_{\ell}(A)$ is the associated Tamagawa number at a prime $\ell$.
\end{itemize}
In the CM case the same conclusion holds for any ordinary or a non-ordinary prime $p\nmid2N$. 
\end{thm}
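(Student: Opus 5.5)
The plan is to reduce Theorem \ref{p-BSD} to the existing rank-one $\lambda$-part BSD results obtained via the Heegner point main conjecture and the BDP formula \eqref{eq:BDP1} (as in \cite{S,JSW,BST,BSTW}). The only new input needed is the refined non-vanishing of the $p$-adic logarithm of the Heegner point, which is Theorem \ref{thm:A}.

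The first step is to choose an auxiliary imaginary quadratic field $K$ satisfying four conditions. Every prime dividing $Np$ should split in $K$, so that the Heegner hypothesis holds. We need $L(f^{K},1)\neq 0$ for the quadratic twist, which is possible by Bump--Friedberg--Hoffstein/Murty--Murty. We also impose the auxiliary conditions required by the Iwasawa-theoretic input, for example the conditions on $q\| N$ used to get the main conjecture divisibilities. Then $\mathrm{ord}_{s=1}L(f/K,s)=1$, and by Gross--Zagier \cite{GZ,YZZ} the Heegner point $P_f\in A(K)$ is non-torsion. Kolyvagin's method (in its $\GL_2$-type form) gives $\rank A(\BQ)=[F:\Q]$ and finiteness of $\Sha(A)[\lambda^\infty]$, with $A(K)\otimes\Q$ free of rank one over $F\otimes\Q$ on the $\pm$-part containing $P_f$.

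The second step is to relate the $\lambda$-part of the BSD quotient to the BDP $p$-adic $L$-value. Through the anticyclotomic Iwasawa main conjecture for $L_p(f,-)$, proved in the ordinary case via Eisenstein congruences/Wan's work and in the supersingular case $a_p(f)=0$ via \cite{BKO} and related work, we get, up to $\lambda$-units,
$$\#\Sha\cdot\prod c_\ell \sim \frac{L_p(f^\sigma,\mathbf{N}_K)}{\text{explicit factors}}\cdot(\text{index of } P_f)^{-2}.$$
By \eqref{eq:BDP1} the value equals $(1-a_p+p^{-1})^2\log_{\omega_{f^\sigma}}(P_f)^2$ for the embedding $\sigma$ cut out by $\lambda$ and $\iota_p$. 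This is exactly where Theorem \ref{thm:A} enters. Since $F$ is totally real and $P_f$ is non-torsion, $\log_{\omega_{f^\sigma}}(P_f)\neq0$ for every $\sigma\in\Sigma_F$, in particular the one attached to $\lambda$. So the $p$-adic $L$-value is nonzero. This makes the relevant Selmer group $\lambda$-cotorsion at the correct specialization, and the control theorem computes $\#\Sha(A/K)[\lambda^\infty]$ exactly in terms of $\log_{\omega_{f^\sigma}}(P_f)$ divided by a generator of the local image. Combining with Gross--Zagier, which expresses $L'(f/K,1)$ via the Néron--Tate height of $P_f$, and with $L(f^K,1)$ treated by the rank-zero $\lambda$-part results of Kato/Skinner--Urban, yields the $\lambda$-part of the formula for $A/K$. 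Separating the $\pm$-eigenspaces under $\Gal(K/\Q)$ then gives the formula over $\BQ$ for $L^{(d)}(A,1)$, using the factorization $L(A,s)=\prod_\sigma L(f^\sigma,s)$ and the regulator $R(A)$.

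For the CM case we run the same argument with Rubin's main conjecture in place of the $\GL_2$ one. Non-vanishing of the logarithm is again supplied by Theorem \ref{thm:A}, since $F$ is real, so the argument works for any $p\nmid 2N$.

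I expect the main obstacle to be the precise bookkeeping in the second step. We must match the Néron period, the Tamagawa factors and the torsion with the Iwasawa-theoretic quantities and with the $\lambda$-adic index of $\log_{\omega_{f^\sigma}}(P_f)$ in $\CO_{F_\lambda}$. This bookkeeping is only meaningful because Theorem \ref{thm:A} guarantees that the $\sigma$-component of the logarithm is nonzero, rather than merely some component.
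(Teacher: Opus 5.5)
Your proposal takes the same approach as the paper. The paper's proof of Theorem~\ref{p-BSD} is just the citation of \cite[Thm.~11.12]{BSTW}; that BDP-based argument needs $\log_{\omega_{f^\sigma}}(P_f)\neq 0$ for the particular $\sigma$ determined by $\lambda$ and $\iota_p$, not merely for some $\sigma$, and Theorem~\ref{thm:A} supplies exactly this because $F$ is totally real, as you say. Your expansion of the cited Iwasawa-theoretic bookkeeping is only schematic, and the paper does not carry it out either: the main conjecture, control theorem and \eqref{eq:BDP1} should make $\#\Sha(A/K)[\lambda^\infty]\cdot\prod_\ell c_\ell$ match the \emph{square} of the $\lambda$-adic index of $P_f$ rather than involve its inverse square, and Kato/Skinner--Urban alone does not give the rank-zero input when $a_p(f)=0$.
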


This theorem is just \cite[Thm.~11.12]{BSTW}. The approach of {\it loc.~cit.} relies on the BDP formula \eqref{eq:BDP1} and, in turn, the non-vanishing of the $p$-adic logarithm (see also \cite{JSW}). This non-vanishing is supplied by this paper 
(cf.~Theorem~\ref{thm:A}). 

\begin{remark}\label{rmk,PR} The non-vanishing of $p$-adic logarithms in Theorem~\ref{thm:A} is also implicitly used in the proofs of Perrin-Riou's conjecture for $\GL_2$-type abelian varieties in \cite{ BKO,BSTW}.
\end{remark}

\section{The $p$-adic analytic subgroup theorem}
Theorems \ref{thm,mr'}, \ref{thm,mr2'}, and \ref{thm,mr3'} are all simple consequences of the following version of the $p$-adic
analytic subgroup theorem (cf.~\cite[Thm.~2.2]{FP}).  

Let $A$ be an abelian variety over $\ov{\BQ}$ and let $t_A$ be its tangent space.  Let $V_p = t_A\otimes_{\ov{\BQ}}\ov{\BQ}_p$ (via the fixed embedding $\iota_p:\ov{\BQ}\hookrightarrow \ov{\BQ}_p$), and let
$\log_A:A(\ov{\BQ}_p)\rightarrow V_p$ be the $p$-adic logarithm of $A$.
\begin{thm}\label{thm,p-AST}
Let $x \in A(\ov{\mathbb{Q}})$ and suppose $W \subset t_A$ is a 
$\ov{\BQ}$-subspace such that $\log_A(x) \in W_p=W\otimes_{\ov{\BQ}}\ov{\BQ}_p\subset V_p$.
Then there exists a commutative algebraic subgroup $B\subset A$
defined over $\ov{\BQ}$ such that 
\begin{enumerate}
    \item[(i)] $t_B \subset W$,
    \item[(ii)] $x \in B(\ov{\BQ})$.
\end{enumerate}
Replacing $B$ with its identity component, we may assume $B$ is an abelian subvariety of $A$ defined over $\ov{\BQ}$, provided we replace {\rm (ii)} with 
\begin{enumerate}
    \item[(ii)'] there exists $0\neq m \in \BZ$ such that $mx\in B(\ov{\BQ})$.
\end{enumerate}
\end{thm}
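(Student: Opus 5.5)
The plan is to deduce the statement from the $p$-adic analytic subgroup theorem as it appears in the transcendence literature, in the form of \cite[Thm.~2.2]{FP}, and to add only the routine reductions needed to match our normalizations. The statement there concerns a commutative algebraic group $G$ over $\ov{\BQ}$, a point $u$ in the domain of convergence of the $p$-adic exponential of $G$ whose exponential is algebraic, and a $\ov{\BQ}$-subspace $W\subset t_G$ with $u\in W\otimes\ov{\BQ}_p$. Its conclusion is that $\exp_G(u)$ lies in the $\ov{\BQ}$-points of an algebraic subgroup $B$ with $t_B\subset W$. We will take $G=A$ and use this as a black box; its proof is a Baker--W\"ustholz style auxiliary-function argument that we will not reprove.

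\textbf{Step 1: reduce to a point in a small neighbourhood.} Since $A(\ov{\BQ}_p)$ is a union of compact groups and $\log_A$ is a homomorphism, there is an integer $N\geq 1$ such that $Nx$ lies in a neighbourhood $U$ of the origin on which $\log_A$ and $\exp_A$ are mutually inverse analytic isomorphisms. Moreover $Nx\in A(\ov{\BQ})$ and $\log_A(Nx)=N\log_A(x)\in W_p$. Applying the cited theorem to $u=\log_A(Nx)$ then produces an algebraic subgroup $B_1\subset A$ over $\ov{\BQ}$ with $t_{B_1}\subset W$ and $Nx\in B_1(\ov{\BQ})$.

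\textbf{Step 2: recover $x$ itself.} Set $B=[N]^{-1}(B_1)$, the scheme-theoretic preimage of $B_1$ under multiplication by $N$. This is a commutative algebraic subgroup defined over $\ov{\BQ}$ and it contains $x$. Because $[N]$ is an isogeny, it acts on $t_A$ as the invertible scalar $N$. Hence $t_B=N^{-1}t_{B_1}=t_{B_1}\subset W$, and we obtain (i) and (ii).

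\textbf{Step 3: the abelian-subvariety version.} Let $B^0$ be the identity component of $B$. It is a connected commutative algebraic subgroup of an abelian variety, hence an abelian subvariety, and it satisfies $t_{B^0}=t_B\subset W$. The quotient $B/B^0$ is finite, of order $m$ say, so $mx\in B^0(\ov{\BQ})$, which is (ii)'.

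The only real content is the transcendence input in Step 1. The one delicate point is matching hypotheses: the cited theorem is stated for points of the domain of the exponential, whereas our $\log_A$ is defined on all of $A(\ov{\BQ}_p)$. The multiplication-by-$N$ device in Steps 1 and 2 is exactly what bridges this gap, and it should be checked carefully that it preserves both algebraicity of the point and the tangent-space condition.
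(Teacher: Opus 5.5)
Your proposal is correct and takes essentially the paper's route. The paper gives no proof beyond citing the $p$-adic analytic subgroup theorem \cite[Thm.~2.2]{FP} and then passing to the identity component to obtain (ii)'. Your Steps 1--2 (replacing $x$ by $Nx$ to land in the domain of $\exp_A$, then taking $[N]^{-1}(B_1)$ and using $t_{[N]^{-1}(B_1)}=t_{B_1}$) spell out a routine normalization that the paper leaves implicit.
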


Suppose $\CO\subset\End(A)$ is an order in a number field $F$. 
Write $\CO = \BZ + \BZ\alpha_1 + \cdots + \BZ\alpha_r$ and 
consider 
$$
B' = B + \alpha_1B + \cdots + \alpha_r B \subset A. 
$$
This is an $\CO$-stable algebraic subgroup, and its tangent space
is contained in the subspace generated by the action of $\CO$ on $W$.
In particular, if $W$ is $F$-stable, then $t_{B'}\subset W$.
This leads to the following useful variant of Theorem \ref{thm,p-AST}:
\begin{thm}\label{thm,p-AST2}
Suppose there exists an embedding $F\hookrightarrow \End^0(A)$
of a number field $F$. Let $x \in A(\ov{\mathbb{Q}})$ and suppose $W \subset t_A$ is an $F$-stable $\ov{\BQ}$-subspace such that $\log_A(x) \in W_p=W\otimes_{\ov{\BQ}}\ov{\BQ}_p\subset V_p$.
Then there exists an abelian subvariety $B\subset A$
defined over $\ov{\BQ}$ such that 
\begin{enumerate}
    \item[(i)] $t_B \subset W$,
    \item[(ii)] there exists a compatible embedding\footnote{That is, $B$ is an $F$-stable abelian subvariety in our earlier terminology from
    \S\ref{ss, tf}.} $F\hookrightarrow \End^0(B)$,
    \item[(iii)] there exists $0\neq m\in\BZ$ such that $mx \in B(\ov{\BQ})$.
\end{enumerate}
\end{thm}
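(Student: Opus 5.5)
We will deduce Theorem \ref{thm,p-AST2} from Theorem \ref{thm,p-AST} by the averaging construction described just before the statement. First, since $F\hookrightarrow\End^0(A)$, the order $\CO = F\cap \End(A)$ is an order in $F$; we fix a $\BZ$-basis $\CO=\BZ+\BZ\alpha_1+\cdots+\BZ\alpha_r$. Applying Theorem \ref{thm,p-AST} to $x$ and $W$ gives an abelian subvariety $B_0\subset A$ over $\ov{\BQ}$ with $t_{B_0}\subset W$ and $m x\in B_0(\ov{\BQ})$ for some $0\neq m\in\BZ$.

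Next we set
$$B' = B_0+\alpha_1 B_0+\cdots+\alpha_r B_0\subset A,$$
the image of the addition morphism $B_0^{r+1}\to A$, $(b_0,\dots,b_r)\mapsto b_0+\alpha_1b_0'+\cdots$ (more precisely $(b_0,\dots,b_r)\mapsto b_0+\sum_i\alpha_i b_i$). As the image of an abelian variety under a homomorphism, $B'$ is an abelian subvariety defined over $\ov{\BQ}$. Its tangent space is the image of the differential, namely $t_{B'} = t_{B_0}+\sum_i d\alpha_i(t_{B_0})$. Since $W$ is $F$-stable and each $\alpha_i$ acts on $t_A$ through $F$, we get $d\alpha_i(t_{B_0})\subset d\alpha_i(W)\subset W$, so $t_{B'}\subset W$, giving (i). Clearly $mx\in B_0(\ov{\BQ})\subset B'(\ov{\BQ})$, giving (iii).

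For (ii), we check that $B'$ is $\CO$-stable: for $\beta\in\CO$, $\beta\alpha_i$ is a $\BZ$-combination of $1,\alpha_1,\dots,\alpha_r$, so $\beta(\alpha_iB_0)\subset B_0+\sum_j\alpha_jB_0 = B'$, using that a sum of subgroups is closed under integer multiples. Hence $\CO$ restricts to a ring homomorphism $\CO\to\End(B')$, which is injective unless $B'=0$ (if some nonzero $\beta$ killed $B'$, then since $\beta$ is an isogeny of $A$ this forces $B'$ finite, hence $0$). Tensoring with $\BQ$ gives $F=\CO\otimes\BQ\hookrightarrow\End^0(B')$ compatible with the action on $A$. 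The degenerate case $B'=0$ (when $x$ is torsion) is harmless: one may either allow the zero subvariety or note all statements are trivially satisfied.

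The only mildly delicate point is verifying $t_{B'}\subset W$, i.e.\ identifying the tangent space of the sum of abelian subvarieties with the sum of tangent spaces (true since the addition map is surjective and smooth onto its image in characteristic zero), together with the compatibility of the $F$-action on $t_A$ induced by $\theta$ with the action used to define $F$-stability of $W$; both are routine.
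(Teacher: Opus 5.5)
Your proposal is correct and is essentially the paper's own argument: apply Theorem \ref{thm,p-AST} to obtain $B_0$, then replace it by $B_0+\alpha_1B_0+\cdots+\alpha_rB_0$ for a $\BZ$-basis of an order $\CO\subset F\cap\End(A)$, which is $\CO$-stable and has tangent space inside the $\CO$-span of $t_{B_0}$, hence inside $W$. You also supply two details the paper leaves implicit: that the differential of the addition map is surjective in characteristic zero, and that $F\to\End^0(B')$ is injective when $B'\neq 0$.
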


%\begin{remark}\label{rmk,p-AST}\noindent
%\begin{itemize}
%If \( A \) is a \( d \)-dimensional variety, then so is \( B \).
%\item 
%Suppose that \( \mathcal{O} \subset \text{End}_{\overline{\mathbb{Q}}}(A) \) is an order in a number field \( F \). Then \( \mathcal{O} \) acts on \( t_A \), hence \( V \) is an \( F \)-vector space and an \( \CO \otimes \overline{\BQ} \)-module. If \( W \) is also an \( F \)-subspace, then we may assume that \( B \) is \( \mathcal{O} \)-stable, i.e. \( \mathcal{O} \subset \text{End}_{\overline{\mathbb{Q}}}(B) \) consistently with \( B \subset A \).
%To see this, write \( \mathcal{O} = \mathbb{Z} + \mathbb{Z} \alpha_1 + \ldots + \mathbb{Z} \alpha_r \). Then 
%\[
%B' := B + \alpha_1 B + \ldots + \alpha_r B \subset A \quad 
%\] is $\mathcal{O}$-stable and
%$
%t_{B'} = t_B + \alpha_1 t_B + \ldots + \alpha_r t_B \subset W 
%$ since  $W$  is  $F$-stable.
%%\end{itemize}
%\end{remark} 

\section{Proofs of the main results}\label{s, proofs}
We now prove Theorems \ref{thm,mr'}, \ref{thm,mr2'}, and \ref{thm,mr3'}.
We keep to the notation introduced in Section \ref{s, mr}.

\subsection{Proof of Theorem~\ref{thm,mr'}}\label{ss, pf1}
Let $x\in A(H)$ be a non-torsion.
Suppose there exists some $\sigma\in\Sigma_F$ and some
$0\neq \omega\in\Omega_{\sigma}$ such that $$\log_\omega(x)=0.$$
Let $W = \ker\ell_\omega\subset V$. Since $\omega$ is $E$-stable
this is an $E$-stable $\ov{\BQ}$-subspace, and 
$\log_A(x)$ must lie in $W_p = W\otimes_{\ov{\BQ}}\ov{\BQ}_p\subset V_p$.
It then follows from Theorem \ref{thm,p-AST2} that 
there exists an abelian subvariety $B \subset A$ and an integer 
$m>0$ such that 
\begin{itemize}
\item $t_B \subset W$,
\item there is a compatible embedding $E\hookrightarrow\End^0(B)$,
\item $mx\in B(\ov{\BQ})$.
\end{itemize}
Since $x$ is non-torsion, $mx\neq 0$ and so $B$ must be non-zero.
As noted in the discussion of trace fields in \S\ref{ss, tf}, the existence
of such a $B$ then implies that $\theta$ must extend to an embedding 
$E\times E \hookrightarrow \End^0(A)$ (with $\theta$ being the restriction to the diagonal). So if no such extension exists, then we are done.

Suppose then that $\theta$ does extend to an embedding of $E\times E$.
As explained in \S\ref{ss, tf}, the trace field of any such embedding is the extension of $L$ generated by the traces of the action of $E$ on $t_B$.
As is also explained there, $2\dim(B) = [E:\Q]$

Let $g\in \Gal(\ov{\BQ}/H)$. Then $mx\in gB\subset A$. 
As the action of $E$ on $A$ is defined over $L$, $gB$ is also $E$-stable.
But then $B\cap gB$ is $E$-stable and non-zero (since $mx$ belongs to this intersection). The identity component of this intersection must then be a non-zero $E$-stable abelian subvariety. But this means its dimension must be at least $[E:\Q]/2$, which is the dimension of $B$. It follows that $gB = B$. Hence $B$ is defined over $H$, and there is a compatible
embedding $E\hookrightarrow \End_H^0(B)$. But then the trace of the action of any $e\in E$ on $t_B$ takes values in $H$ and so the trace field is contained in $H$, contradicting the hypothesis (ii) of the theorem.

This contradiction completes the proof of Theorem \ref{thm,mr'}.

\subsection{Proof of Theorem \ref{thm,mr2'}}
Since $E'$ is CM of degree equal to $2\dim(A)$ the $\tau\in \Sigma_{E'}$ such that $e_\tau\Omega\neq 0$ comprise a CM-type $\Phi$; this is seen by first noting that $H_1(A(\C),\Q)$ is one-dimensional $E'$-space and then 
using that $H_1(A(\C),\C) = t_{A/\C}\oplus\ov{t}_{A/\C}$.
In particular, the $\sigma\in\Sigma_E$ such that $\Omega_\sigma\neq 0$
are precisely the $\sigma = \tau|_{E}$ for $\tau\in\Phi$ and that $\tau$ 
is uniquely determined by $\sigma$.
It follows that any $0\neq\omega\in \Omega_\sigma$ is also $E'$-stable.

Returning to the proof of Theorem \ref{thm,mr'} in \S\ref{ss, pf1}, the non-zero abelian variety $B$ can be assumed to be $E'$-stable.  But this is impossible as
$2\dim(B)$ must be divisible by $[E':\BQ]$ and $2\dim(B)<2\dim(A)=2[E:\BQ]= [E:\BQ]$.

\subsection{Proof of Theorem \ref{thm,mr3'}}
Let $B_1,B_2\subset A$ be $E$-stable abelian subvarieties such that 
$m_1x\in B_1$ and $m_2x\in B_2$ for some non-zero integers $m_1,m_2$.
Then $0\neq m_2m_1 x \in B_1\cap B_2$, so the identity component $B_3$ of $B_1\cap B_2$ is a non-zero $E$-stable abelian variety, and $m_3m_2m_1x\in B_3$ for some non-zero integer $m_3$. It easily follows from this that there is a unique non-zero $E$-stable abelian subvariety $B$ of $A$ of minimal dimension containing $mx$ for some non-zero integer $m$.

Suppose $B\neq A$. As there are no non-zero proper 
$E$-stable abelian subvarieties of $B$,
the conclusion of the theorem now follows from Theorem \ref{thm,p-AST2} by the same arguments we employed before.

%\subsubsection{Proof of Theorem~\ref{theoremnonzero}} 
%Suppose that $\mathrm{log}_{\tau_0}(x)=0$ for an embedding $\tau_{0}: E \hookrightarrow \ov{\BQ}_p$. 

%Then proceeding as in the preceding subsection there exists an abelian subvariety $B\subsetneq A$ over $\ov{\mathbb{Q}}$ such that
%\begin{itemize}
%\item $mx\in B(\ov{\BQ})$ for some $m \in \BZ$.
%\item $t_B\subseteq \oplus_{\tau\not=\tau_{0}} V_{\tau}$.
%\item $B$ is $E$-stable.
%\end{itemize}
%It follows from Corollary \ref{unique} that $B$ is the smallest $E$-stable Abelian subvariety of $A$ containing $x$. 

%Since $B$ has CM by Lemma \ref{lemma1}, and note that
%$$t_B\otimes \ov{\mathbb{Q}}_p=\oplus_{\tau'\in\Phi_B} V_{\tau'}$$ where $(E,\Phi_B)$ is the CM type of $B$. In view of the second bullet point above note that $\tau_{0}\not\in \Phi_B$ and so this contradiction concludes the proof. 

\section{Complements} 
We finish with a few comments about what the $p$-adic analytic subgroup theorem can suggest about the $p$-adic closure of a set of $\BZ$-independent points $x_1,...,x_r\in A(\ov{\BQ})$ in $A(\ov{\BQ}_p)$,
or, equivalently, about the dimension of the $\ov{\BQ}_p$-space spanned
by $\log_A(x_1),\ldots,\log_A(x_r)$. Here, of course, 
$A$ is an abelian variety over $\ov{\BQ}$.  

\subsection{The set-up} As always, we fix an embedding
$\iota_p:\ov{\BQ}\hookrightarrow\ov{\BQ}_p$.
Let $d = \dim(A)$.
Let $\{\omega_{1},\cdots,\omega_{d}\}\subset \Omega^{1}_{A}$ be a $\ov{\BQ}$-basis of $\Omega^1_A$. Let $t_A$ be the tangent space of $A$ and let $\ell_{\omega_i}:t_A\rightarrow \ov{\BQ}$ be the corresponding
$\ov{\BQ}$-linear maps. Let $V_p = t_A\otimes_{\ov{\BQ}}\ov{\BQ}_p$ 
and let $\log_A:A(\ov{\BQ}_p)\rightarrow V_p$ be the $p$-adic logarithm
of $A$. The $\ell_{\omega_i}$ extend to $\ov{\BQ}_p$-linear maps on $V_p$ and we set $\log_{\omega_i} = \ell_{\omega_i}\circ\log_A$.

Let $\uline{x}= \{x_1,\cdots,x_r\}$ be a set of $\BZ$-linearly independent points $x_i\in A(\ov{\BQ})$ (so in particular, the $x_i$ are all non-torsion). We are interested in the $\ov{\BQ}_p$-rank of the matrix
$$
\CL_{\uline{x}} = (\log_{\omega_i}(x_j)) \in M_{d\times r}(\ov{\BQ}_p).
$$

For what to expect about this rank we recall the following definition and conjecture.

\begin{defn}[structural rank]\label{def,str} Let $F$ be a field of characteristic zero and $M\in M_{m\times n}(F)$. Choose a $\BQ$-basis $\{\ell_{1},\cdots,\ell_{t}\}$ for the entries of $M$ and write $M=\sum_{i=1}^{t} \ell_{i} M_{i}$ for $M_{i} \in M_{m\times n} (\BQ)$. Put 
$$
M_{x}=\sum_{i=1}^{t}x_{i}M_{i} \in M_{m\times n}(\BQ(x_{1},\cdots,x_{t}))
$$
for variables $x_{1}, \cdots, x_{t}$. The {\it structural rank} of $M$ is defined to be the rank of $M_{x}$ over the function field  $\BQ(x_{1},\cdots,x_{t})$.
\end{defn}
\noindent The structural rank conjecture for the matrix 
$\CL_{\uline{x}}$ posits:
\begin{conj}\label{conj,str} The $\ov{\BQ}_p$-rank of $\CL_{\uline{x}}$ equals its structural rank.
\end{conj}
This is a special case of a general conjecture (see~\cite{Ro} and~\cite[Conj.~4.4]{Da}), but it suffices for the purposes of this note.  Note that the 
$\ov{\BQ}_p$-rank of $\CL_{\uline{x}}$ is trivially bounded above by the structural rank. The force of this conjecture is that this upper bound is an equality.

\subsection{Towards the structural rank of $\CL_{\uline{x}}$}
Suppose now that $\End^0(A)=\BQ$ (so $A$ is generic).

\begin{prop}\label{prop, str} The structural rank of $\CL_{\uline{x}}$ is $\min\{d,r\}$.
\end{prop}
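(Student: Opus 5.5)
The plan is to prove the two inequalities separately, with all the work in the lower bound. The upper bound is formal: $\CL_{\uline{x}}$ is $d\times r$, so its structural rank is at most $\min\{d,r\}$. For the lower bound, let $k=\min\{d,r\}$ and suppose, for a contradiction, that the structural rank $s$ of $\CL_{\uline{x}}$ satisfies $s<k$.

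The first step converts $s<k$ into a vanishing block. Entries of $\CL_{\uline{x}}$ are $\ov{\BQ}$-linear in the row (the differential $\omega$) and $\BZ$-linear in the column (the point). So changing the basis $\{\omega_i\}$ by a matrix in $\GL_d(\ov{\BQ})$, or the points $x_j$ by a matrix in $\GL_r(\BQ)$, does not change the structural rank. Indeed, such changes only multiply $M_x$ by invertible matrices, after noting that passing from a $\BQ$-basis to a $\ov{\BQ}$-basis of the entries is a specialization, which can only lower the generic rank. I would then invoke the standard structure lemma for matrices whose entries lie in a vector space: the rank lemma underlying Waldschmidt's treatment of structural rank, in the spirit of the Roy/Damian references \cite{Ro,Da}. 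It says that generic rank $s<k$ forces the following configuration, after such row and column changes: a zero block with $a$ rows and $b$ columns, where $a,b\ge 1$ and $a+b>s$. (The more precise form gives $(d-a)+(r-b)\le s$, so the block is genuinely large.) Concretely, there would be a nonzero $\ov{\BQ}$-subspace $W'\subset\Omega^1_A$ of dimension $a$ and $\BZ$-independent points $y_1,\dots,y_b$ in the $\BQ$-span of the $x_j$ such that $\log_\omega(y_l)=0$ for all $\omega\in W'$ and all $l$.

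The second step applies the $p$-adic analytic subgroup theorem to $A^b$ and the point $y=(y_1,\dots,y_b)$. Its tangent space is $t_A\otimes\BQ^b$, and for $W$ I take $\{v\in t_A:\ell_\omega(v)=0\ \forall\omega\in W'\}\otimes\BQ^b$. Then $\log_{A^b}(y)\in W_p$, and Theorem \ref{thm,p-AST} gives an abelian subvariety $B\subset A^b$ with $t_B\subset W$ and $my\in B$ for some $m\neq 0$. Since $\End^0(A)=\BQ$, any abelian subvariety of $A^b$ has the form $A\otimes R$ for a $\BQ$-subspace $R\subset\BQ^b$, with tangent space $t_A\otimes R$. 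Because $W'\neq 0$, we have $t_A\otimes R\subset W$ only if $R=0$. Then $B=0$, so $my=0$, which contradicts the $\BZ$-independence of the $y_l$. Hence $s\ge k$.

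I expect the main obstacle to be the first step. One must identify exactly which version of the structural-rank/Kronecker-type lemma is needed, and check that it permits arbitrary $\ov{\BQ}$ operations on rows but only rational operations on columns. The rational restriction on columns is essential, since only rational combinations of the $x_j$ are points of $A$ to which the analytic subgroup theorem can be applied. If a ready-made statement is unavailable, I would first try to prove the zero-block form directly by induction on $k$, specializing the variables one at a time and using that a generic linear pencil of rank $<k$ has a common kernel or cokernel after rational change of coordinates.
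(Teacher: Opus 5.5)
Step~2 of your proposal is correct. Step~1, however, carries the whole argument, you leave it open, and the lemma you invoke there is false as stated. A structural-rank deficiency does not force a zero block with $a+b>s$, and still less the compression form $(d-a)+(r-b)\le s$. Take
\[
M=\begin{pmatrix}0&\ell_1&\ell_2\\ -\ell_1&0&\ell_3\\ -\ell_2&-\ell_3&0\end{pmatrix}
\]
with $\ell_1,\ell_2,\ell_3$ linearly independent over $\ov\BQ$. Its structural rank is $2<3$. But
\[
uMv=\ell_1(u_1v_2-u_2v_1)+\ell_2(u_1v_3-u_3v_1)+\ell_3(u_2v_3-u_3v_2)
\]
vanishes only when $v\in\ov\BQ u$, so $M$ has no $1\times 2$ or $2\times 1$ zero block. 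This is the classical space of bounded rank that is not a compression space.

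Your preliminary claim that row changes in $\GL_d(\ov\BQ)$ preserve structural rank is also false in general. Structural rank is a $\BQ$-notion: $\left(\begin{smallmatrix}1&\sqrt2\\ \sqrt2&2\end{smallmatrix}\right)$ has structural rank $2$, yet one $\ov\BQ$ row operation reduces it to rank $1$. Passing to $\ov\BQ$-coefficients would also destroy the rationality of column operations, which you rightly call essential. As written, the proposal does not prove the proposition.

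The missing idea is the standard maximal-rank lemma. It gives exactly what Step~2 needs, namely a block with $a,b\ge 1$, and it is entirely rational.
\begin{itemize}
\item Write $\CL_{\uline{x}}=\sum_k\ell_kM_k$ with each $M_k$ rational, let $s$ be the structural rank, and choose a rational specialization $M_0=\sum_k c_kM_k$ of rank exactly $s$.
\item For each $k$ and each $t\in\BQ$, the matrix $M_0+tM_k$ is again a specialization, so it has rank at most $s$.
\item Choose rational bases in which $M_0=\mathrm{diag}(I_s,0)$. In the $(s+1)$-minor on rows $1,\dots,s,s+i$ and columns $1,\dots,s,s+j$, the coefficient of $t$ is the $(s+i,s+j)$-entry of $M_k$. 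Hence $M_k(\ker M_0)\subset \mathrm{im}(M_0)$ for every $k$.
\item If $s<\min\{d,r\}$, take a nonzero rational row vector $u$ killing $\mathrm{im}(M_0)$ and a nonzero integral $v\in\ker M_0$. Set $\omega=\sum_i u_i\omega_i\neq 0$ and let $y=\sum_j v_jx_j$, which is non-torsion. Then $\log_\omega(y)=u\,\CL_{\uline{x}}\,v=\sum_k \ell_k\,(uM_kv)=0$.
\item Theorem~\ref{thm,p-AST} with $W=\ker\ell_\omega$ now gives a contradiction, because $A$ is simple (a consequence of $\End^0(A)=\BQ$). Passing to $A^b$ is unnecessary.
\end{itemize}

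Once repaired, your argument takes a genuinely different route from the paper's. The paper proves the stronger statement that the $dr$ entries $\log_{\omega_i}(x_j)$ are $\BQ$-linearly independent.
\begin{itemize}
\item It handles an arbitrary relation $\sum a_{ij}\log_{\omega_i}(x_j)=0$, not only rank-one relations $u\,\CL_{\uline{x}}\,v=0$.
\item It does so by applying the subgroup theorem on $A^d$ to $(y_1,\dots,y_d)$ with $y_i=\sum_j a_{ij}x_j$, using the functional $\sum_i \ell_{\omega_i}\circ \mathrm{pr}_i$ and the fact that $\End^0(A^d)=M_d(\BQ)$.
\item The structural rank is then that of a generic $d\times r$ matrix.
\end{itemize}
Your route replaces this with a short linear-algebra lemma. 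It uses the subgroup theorem only on $A$ itself, so it needs only that $A$ is simple.
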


\begin{proof} 
Suppose that 
$$
\sum_{i=1}^{d}\sum_{j=1}^{r} a_{ij}\log_{A,\omega_{i}}(x_{j})=0
$$
for some $a_{ij}\in \BQ$. Without loss of generality we may and do assume that $a_{ij}\in \BZ$ for all $i,j$. Put $y_{i}=\sum_{j=1}^{r}a_{ij}x_{j} \in A(\ov{\BQ})$. Then 
\begin{equation}\label{eq,van}
\sum_{i=1}^{d}\log_{A,\omega_{i}} y_{i} = 0.
\end{equation}

The following argument is based on the $p$-adic analytic subgroup theorem for the $\ov{\BQ}$-abelian variety 
$$
B=A^d.
$$
Note that $\Omega^{1}_{B}=(\Omega_{A}^{1})^{\oplus d}$ and $t_{B}=t_{A}^{\oplus d}=(\ov{\BQ}^{d})^{\oplus d}$. Let $\lambda_i:t_B\rightarrow\ov{\BQ}$
be the map defined by projecting to the $i$th summand of $t_{A}^{\oplus d}$ and then
applying $\ell_{\omega_i}$.
Denote its extension to $V_{B,p}=t_{B}\otimes_{\ov{\BQ}} \ov{\BQ}_p$ by the same. Let $\lambda = \sum_{i=1}^d \lambda_i$. 
Let $\log_B:B(\ov{\BQ}_p)\rightarrow V_{B,p}$ be the $p$-adic logarithm of $B$.
Put $y:=(y_{1},\cdots,y_d)\in B(\ov{\BQ})$. In view of \eqref{eq,van} we have
\begin{equation}\label{eq,van'}
\log_{B}(y)\in \ker(\lambda).
\end{equation}
Since $\ker(\lambda) = \ker(\lambda|t_B)\otimes_{\ov{\BQ}}\ov{\BQ}_p$,
it follows from the $p$-adic analytic subgroup Theorem~\ref{thm,p-AST} that there exists an abelian subvariety $C \subset B$ over $\ov{\BQ}$ such that 
\begin{itemize}
\item $m\cdot y \in C(\ov{\BQ})$ for some $0\neq m\in\BZ$,
\item $t_{C} \subset \ker{\lambda|t_B}$.
\end{itemize}

Write $t_{C}=N\cdot t_{B}$ for some $N\in \End^{0}(B) = M_{d}(\BQ)$. Then 
$$
N \begin{pmatrix} t_{1}\\ \vdots \\ t_{d} \end{pmatrix} \in \ker{\lambda}
$$
for all $\uline{t}=(t_{1}, \ldots, t_{d}) \in t_{B}=t_{A}^{\oplus d}$. Writing $N=(n_{ij})$, we therefore have 
$$
\sum_{i=1}^{d} \ell_{\omega_i} (\sum_{j=1}^{d}n_{ij}t_{j}) = 0.
$$
Taking $\uline{t} = (0,\ldots, 0, t_j, 0, \ldots,0)$ we then have
$$
\sum_{i=1}^d \ell_{\omega_i}(n_{i,j}t_j)=0
$$
for all $t_j\in t_A$. But this means that $\sum_{i=1}^d n_{ij}\ell_{\omega_i}=0$.
But the $\omega_i$ are linearly independent over $\BQ$, hence so are the $\ell_{\omega_i}$. Therefore $n_{ij}=0$ for all $i$.  It follows that $N=0$.
This means that $C=0$ and hence that $y$ must be torsion. But this means that each $y_i$ must also be torsion. As the $x_j$ are linearly independent over $\BZ$ by hypothesis,
this in turn implies that $a_{ij}=0$ for all $i,j$. 

It follows that the entries of $\CL_{\uline{x}}$ are linearly independent over $\BQ$ and hence its strucutural rank is just the rank of the matrix $M = (x_{i,j})\in M_{r\times d}(\BQ(\{x_{ij}\}))$ for variables $x_{ij}$.  This, of course, is just $\min\{r,d\}.$
\end{proof}

As a consequence we deduce:
\begin{cor}\label{cor,str}
Suppose $\End^0(A) = \Q$, then the structural rank conjecture for $\CL_{\uline{x}}$ implies that the dimension over $\ov{\BQ}_p$ of 
$\sum_{i=1}^r \ov{\BQ}_p\log_A(x_i) \subset V_p$ equals $\min\{r,d\}$.
\end{cor}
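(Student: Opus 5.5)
This follows directly from Proposition~\ref{prop, str} together with Conjecture~\ref{conj,str}. The only thing to set up is the translation between the rank of the matrix $\CL_{\uline{x}}$ and the dimension of the span of the vectors $\log_A(x_j)$.

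Since $\{\omega_1,\ldots,\omega_d\}$ is a $\ov{\BQ}$-basis of $\Omega^1_A$, the functionals $\ell_{\omega_1},\ldots,\ell_{\omega_d}$ form a basis of the $\ov{\BQ}$-dual of $t_A$. Extending scalars along $\iota_p$, they give a $\ov{\BQ}_p$-basis of the dual of $V_p$. Consider the coordinate map
$$V_p\rightarrow\ov{\BQ}_p^d, \qquad v\mapsto(\ell_{\omega_1}(v),\ldots,\ell_{\omega_d}(v)).$$
This is a $\ov{\BQ}_p$-linear isomorphism, and it sends $\log_A(x_j)$ to the $j$th column of $\CL_{\uline{x}}=(\log_{\omega_i}(x_j))$. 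Hence
$$\dim_{\ov{\BQ}_p}\sum_{j=1}^r\ov{\BQ}_p\log_A(x_j)$$
equals the column rank of $\CL_{\uline{x}}$ over $\ov{\BQ}_p$, that is, its $\ov{\BQ}_p$-rank.

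Now use the hypothesis $\End^0(A)=\BQ$ together with the standing assumption that the $x_j$ are $\BZ$-linearly independent. Under these conditions, Proposition~\ref{prop, str} says the structural rank of $\CL_{\uline{x}}$ is $\min\{d,r\}$. Conjecture~\ref{conj,str} says the $\ov{\BQ}_p$-rank equals the structural rank. Combining the two with the previous paragraph gives the dimension $\min\{r,d\}$.

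There is no real obstacle here; all the content sits in Proposition~\ref{prop, str}. The one point worth checking is that the identification of the span with the column space of $\CL_{\uline{x}}$ does not depend on the choice of basis $\{\omega_i\}$. It does not: changing the basis multiplies $\CL_{\uline{x}}$ on the left by an invertible matrix, which preserves its $\ov{\BQ}_p$-rank.
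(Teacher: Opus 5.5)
Your proof is correct and follows the paper's route: the paper states the corollary as an immediate consequence of Proposition~\ref{prop, str} together with Conjecture~\ref{conj,str}. Your coordinate isomorphism $V_p\cong\ov{\BQ}_p^d$ given by the $\ell_{\omega_i}$ spells out what the paper leaves implicit, namely that the span of the $\log_A(x_j)$ is identified with the column space of $\CL_{\uline{x}}$.
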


\begin{remark}[Relation with a conjecture of Poonen and Prasad]
In the special case that $A$ is defined over $\BQ$ and $x_1,...,x_r \in A(\Q)$ with $r=\rank\,A(\BQ)$, it follows that the structural rank conjecture for 
 $\CL_{\uline{x}}$ implies that the closure of $A(\BQ)$ in $A(\BQ_p)$ is a $p$-adic analytic group of rank equal to $\min\{r,d\}$.  That the dimension is this was
 conjectured by Poonen \cite{Po} without the hypothesis that $\End^0(A)=\BQ$ (see also \cite[Conj.~1]{Pr}). He also asked a conjectural formula for the dimension if $A$ is defined over a general number field (cf.~\cite[Question~6.3]{Po}), which is answered by  \eqref{eq,str-var} below.

 \end{remark}

\subsubsection{Variant}
Suppose $\End^0(A) = F$ is a field. We then suppose that 
$x_1,...,x_r \in A(\ov{\BQ})\otimes_\BZ\BQ$ are $F$-linearly independent.
Consider $\CA = \sum_{i=1}^r F\cdot x_i \in A(\ov{\BQ})\otimes_\BZ\BQ$. 
By hypothesis, the dimension of $\CA$ over $\BQ$ is $r[F:\BQ]$.
We can ask what is the dimension over $\ov{\BQ}_p$ of the 
$\ov{\BQ}_p$-span $\CV_p$ of $\log_A(\CA)\subset V_p$.  The structural rank conjecture sheds some light on this.

Since $\CV_p$ is an $F\otimes_{\BQ}\ov{\BQ}_p$-module, it has a decomposition $\CV_p = \oplus_{\sigma\in \Sigma_F}\CV_{p,\sigma}$, with $\CV_{p,\sigma} = e_\sigma\CV_p$.  So we can focus
on the dimensions of the spaces $\CV_{p,\sigma}$.  
Let $d_\sigma$ be the $\ov{\BQ}$-dimension of $\Omega_\sigma$ and let $\omega_{\sigma,1},...,\omega_{\sigma,d_\sigma}$ be a $\ov{\BQ}$-basis of $\Omega_\sigma$.  Then the $\ov{\BQ}_p$-dimension of $\CV_{p,\sigma}$ is just
the rank of the matrix
$$
\CL_{\uline{x},\sigma} = (\log_{\omega_{\sigma,i}}(x_j))\in M_{d_\sigma\times r}(\ov{\BQ}_p).
$$
A straight-forward adaptation of the proof of Proposition \ref{prop, str} shows
that the entries of this matrix are $F$-linearly independent, so certainly $\BQ$-independent. The structural rank conjecture then predicts that the rank of 
$\CL_{\uline{x},\sigma}$ is $\min\{r,d_\sigma\}$. So we expect 
\begin{equation}\label{eq,str-var}
\dim_{\ov{\BQ}_p}\CV_p \stackrel{?}{=} \sum_{\sigma\in \Sigma_F} \min\{r,d_\sigma\}.
\end{equation}

Suppose that $A$ is defined over $\BQ$ and $\End_\BQ^0(A) = \End^0(A) = F$ is a field.
%(in this case, $F$ must be totally real or CM). 
Then $d = \dim(A) = d_0[F:\BQ]$ for some integer $d_0\geq 1$ and $d_\sigma = d_0$ for all $\sigma$. It follows from \eqref{eq,str-var} that we expect $\dim_{\ov{\BQ}_p}\CV_p$ to be $d_0[F:\BQ] = \dim(A)$ if $r\geq d_0$ and otherwise to equal $r[F:\BQ]$. Finally, note that in this case $\dim_{\ov{\BQ}_p}\CV_p$ is the dimension as a $p$-adic analytic group of the closure $\ov{\CA}\subset A(\BQ_p)$. So \eqref{eq,str-var} would imply Poonen's conjecture (see also \cite[Conj.~1]{Pr}). In the special case that $d_0=1$ (i.e., $\dim(A) = [F:\BQ]$) our main results show this. 
%This should be no surprise: In this special case, the structural rank is clearly $1$ and the structural rank conjecture is obviously true!  

The above discussion also yields Theorem~\ref{thm:B}.

%Let $\omega_1,...,\omega_d$ be a minimal set of $F\otimes\ov{\BQ}$-generators of
%$\Omega^1_A$.

% \subsection{The structural rank conjecture and a conjecture of Poonen}
% \subsubsection{Set-up}\label{ss,str} Let $A$ be an abelian variety over $\ov{\BQ}$ of dimension $d$. 

%   

\end{document}